\newcommand{\Bs}{{\rm Bs}}
\newtheorem{lemma1}{}[section]
\newenvironment{lemma}{\begin{lemma1}{\bf Lemma.}}{\end{lemma1}}
\newenvironment{example}{\begin{lemma1}{\bf Example.}\rm}{\end{lemma1}}
\newenvironment{theorem}{\begin{lemma1}{\bf Theorem.}}{\end{lemma1}}
\newenvironment{corollary}{\begin{lemma1}{\bf Corollary.}}{\end{lemma1}}
\newenvironment{definition}{\begin{lemma1}{\bf Definition.}}{\end{lemma1}}
\newenvironment{setup}{\begin{lemma1}{\bf Setup.}}{\end{lemma1}}
\newenvironment{conjecture}{\begin {lemma1}{\bf Conjecture.}}{\end{lemma1}}
\newenvironment{fact}{\begin{lemma1}{\bf Fact.}}{\end{lemma1}}
\newenvironment{remark*}{{\bf Remark.}}{}
\newenvironment{remarks*}{{\bf Remarks.}}{}
\newenvironment{example*}{{\bf Example.}}{}
\newenvironment{assumption*}{{\bf Assumption.}}{}
\newcommand{\R}{\ensuremath{\mathbb{R}}}
\newcommand{\Q}{\ensuremath{\mathbb{Q}}}
\newcommand{\Z}{\ensuremath{\mathbb{Z}}}
\newcommand{\N}{\ensuremath{\mathbb{N}}}
\newcommand{\PP}{\ensuremath{\mathbb{P}}}
\newcommand{\holom}[3]{\ensuremath{#1:#2  \rightarrow #3}}
\newcommand{\fibre}[2]{\ensuremath{#1^{-1} (#2)}}
\newcommand\sO{{\mathcal O}}
\DeclareMathOperator*{\pic}{Pic}
\author{Daniele Agostini}
\author{Andreas H\"oring}
\address{Daniele Agostini, Fachbereich Mathematik, University of T\"ubingen, Auf der Morgenstelle 10, 72072 T\"ubingen, Germany}
\email{daniele.agostini@uni-tuebingen.de}
\address{Andreas H\"oring, Universit\'e C\^ote d'Azur, CNRS, LJAD, France}
\email{Andreas.Hoering@univ-cotedazur.fr}
\subjclass[2020]{14J42, 14C20, 14J45, 14E30}
\keywords{hyperk\"ahler manifold, holomorphic symplectic manifold, ample divisor, base locus}
\DeclareMathOperator*{\BKX}{\ensuremath{\overline{\mathcal{BK}}_X}}
\DeclareMathOperator*{\CX}{\ensuremath{\overline{\mathcal{C}_X}}}
\title{Fixed divisors on hyperk\"ahler manifolds} 
\date{\today} 
\begin{document}

\begin{abstract} 
Let $X$ be a hyperk\"ahler manifold, and let $A$ be a nef and big divisor on $X$.
We show that the fixed part of the linear system $|A|$ is reduced and as a consequence
$|2A|$ is mobile. If $X$ has dimension four we also show that if the fixed part of $|A|$ is not empty, the mobile part induces a (rational) Lagrangian fibration. 
\end{abstract} 

\maketitle

\section{Introduction}

\subsection{Motivation and main results}

Given an ample divisor on a complex projective manifold with trivial canonical class, one of the most fundamental questions is the existence of global sections and the geometry of the base locus.
For abelian varieties this is a classical subject \cite[Section 4]{BL04}, for K3 surfaces we have
a complete answer due to Mayer \cite{May72} (cf. \cite{SD74} for a proof in positive characteristic):

\begin{theorem} \label{theorem:mayer}
Let $X$ be a smooth K3 surface, and let $A$ be a nef and big divisor on $X$ such that the base locus $\Bs(|A|)$ is not empty. Then we have a decomposition into mobile and fixed part
$$
A \simeq d L + B
$$
where $d \geq 2$ and $|L|$ defines an elliptic fibration 
$$
\holom{\varphi = \varphi_{|L|}}{X}{\PP^1}
$$
such that $B \subset X$ is a section. In particular $B$ is a $(-2)$-curve and $\Bs(|A|)=B$.
\end{theorem}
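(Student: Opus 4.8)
The plan is to carry out the classical analysis of linear systems on K3 surfaces, specialized to this statement. Write $A \simeq M + F$, where $F \geq 0$ is the fixed part of $|A|$ and $M := A - F$ is the mobile part, so that $|M| = |A - F|$ has no fixed component. Since $K_X \simeq \O_X$ and $A$ is nef and big, Mumford vanishing gives $H^1(X, \O_X(A)) = 0$, hence $\dim|A| = 1 + \tfrac12 A^2$ by Riemann--Roch, and this is $\geq 2$ because $A^2$ is a positive even integer. The general member of $|M|$ contains no prescribed curve as a component, so $M$ is nef; moreover $M \neq 0$ and $\dim|M| = \dim|A|$. The first serious input is the classical fact (Saint-Donat, cf.\ \cite{SD74, May72}; this is the statement requiring genuinely different arguments in positive characteristic) that a nef and big complete linear system with empty fixed part on a K3 surface is base-point free. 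Applied to $|A|$, the hypothesis $\Bs(|A|) \neq \emptyset$ then forces $F \neq 0$.

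Next I would show $M^2 = 0$. If $M^2 > 0$, then $M$ is nef and big, so Riemann--Roch and vanishing give $\dim|M| = 1 + \tfrac12 M^2$; comparing with $\dim|M| = \dim|A| = 1 + \tfrac12 A^2$ gives $M^2 = A^2$, i.e.\ $2(M\cdot F) + F^2 = 0$, and hence $A\cdot F = (M+F)\cdot F = -\,M\cdot F$. As $A$, $M$ are nef and $F$ is effective, this forces $A\cdot F = M\cdot F = 0$ and so $F^2 = 0$; but then the Hodge index theorem (using $M^2 > 0$) forces $F \equiv 0$, hence $F = 0$ since $F$ is effective --- contradicting $F \neq 0$. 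So $M^2 = 0$, and writing $M \simeq dL$ with $L$ primitive and nef, $L^2 = 0$, $d \geq 1$, I invoke the second classical input: $|L|$ is a base-point-free pencil inducing an elliptic fibration $\varphi = \varphi_{|L|}\colon X \to \PP^1$ (the general fibre is smooth of genus $p_a(L) = 1$), and $\O_X(dL) \simeq \varphi^*\O_{\PP^1}(d)$, so $|M| = |dL| = d\,|L|$ is base-point free of dimension $d$. Comparing dimensions, $d = 1 + \tfrac12 A^2 \geq 2$.

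It remains to identify $F$. From $A \simeq dL + F$ and $A^2 = 2(d-1)$ we get $2d(L\cdot F) + F^2 = 2(d-1)$. Since $L$ is nef and $F$ effective, $L\cdot F \geq 0$; if $L\cdot F = 0$ then $F^2 = 2(d-1) > 0$, so $F$ is big and therefore $L\cdot F > 0$ (a nonzero nef class meets a big class positively), absurd; if $L\cdot F \geq 2$ then $A\cdot F = d(L\cdot F) + F^2 = 2(d-1) - d(L\cdot F) \leq -2 < 0$, contradicting nefness of $A$. Hence $L\cdot F = 1$ and $F^2 = -2$. Decomposing $F$ into prime components, $L\cdot F = 1$ forces a unique component $\Gamma_0$, occurring with multiplicity one and with $L\cdot\Gamma_0 = 1$, all other components being $\varphi$-vertical; write $F = \Gamma_0 + G$ with $G \geq 0$ supported on fibres. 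Nefness of $A$ gives $A\cdot\Gamma_i = F\cdot\Gamma_i \geq 0$ for each component $\Gamma_i$ of $G$, hence $F\cdot G \geq 0$; together with $G\cdot\Gamma_0 \geq 0$ and $\Gamma_0^2 \geq -2$ (adjunction, $K_X \simeq \O_X$), the chain
\[
-2 = F^2 = F\cdot\Gamma_0 + F\cdot G \ \geq\ F\cdot\Gamma_0 = \Gamma_0^2 + G\cdot\Gamma_0 \ \geq\ -2
\]
is an equality throughout, so $\Gamma_0^2 = -2$, $F\cdot G = G\cdot\Gamma_0 = 0$, and therefore $G^2 = (F-\Gamma_0)\cdot G = 0$. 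If $G \neq 0$, then $G$ is effective with $G^2 = G\cdot L = 0$, so the Hodge index theorem gives $G \simeq qL$ for some integer $q \geq 1$ (as $L$ is primitive and $\NS(X)$ torsion free); then $A \simeq (d+q)L + \Gamma_0$, so $|A| \supseteq (d+q)|L| + \Gamma_0$, a sub-system whose fixed part is just $\Gamma_0$ --- contradicting that $\Gamma_0 + G$ is the fixed part of $|A|$. Hence $G = 0$ and $F = \Gamma_0$; adjunction gives $p_a(\Gamma_0) = 0$, so $B := \Gamma_0$ is a smooth rational $(-2)$-curve, and $L\cdot B = 1$ makes $\varphi|_B\colon B \to \PP^1$ an isomorphism, i.e.\ $B$ is a section. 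Finally, $|M| = d\,|L|$ is base-point free, so $\Bs(|A|) = \Bs(|M|) \cup F = F = B$.

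I expect the real obstacle to be the base-point-freeness input in the first paragraph: establishing that a nef and big linear system on a K3 surface without fixed part is base-point free is the substantive geometric ingredient (and precisely the point where positive characteristic behaves differently), while everything afterwards is Hodge-index bookkeeping and elementary intersection theory on the elliptic surface. A secondary subtlety is ruling out the vertical tail $G$ inside the fixed part, where the nefness of $A$ is genuinely needed --- without it, $F$ could pick up extra fibral components.
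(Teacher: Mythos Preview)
The paper does not give a proof of this theorem: it is quoted in the introduction as the classical result of Mayer \cite{May72} (with \cite{SD74} for positive characteristic) that motivates the hyperk\"ahler generalisations, and no argument for it appears anywhere in the text. Your write-up is a correct reconstruction of the standard proof, so there is nothing in the paper to compare it against directly.

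It is still worth observing that the paper's proofs of the hyperk\"ahler analogues are organised along exactly the template you follow, with the Beauville--Bogomolov form replacing the intersection form: Lemma \ref{lemma1} is the analogue of your step ``$M^2=0$'' (equate $h^0$ via Riemann--Roch and vanishing, then apply the index theorem); Corollary \ref{cor:notprimitive} is the analogue of your reduction of the fixed part to a single prime component once the mobile part is divisible; and the proof of Theorem \ref{thm:HK4} produces the Lagrangian fibration and the divisibility $M\simeq dL$, $d\geq 2$, in the same way you produce the elliptic pencil. The genuine new difficulties in higher dimension are precisely the two inputs you flag as ``classical'': base-point-freeness of big mobile systems is unavailable, and the fixed part is no longer governed by a single $(-2)$-class, which is why the paper needs the longer combinatorial analysis of Theorem \ref{theorem-technical}.

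One minor simplification in your argument: once you have $G\cdot\Gamma_0=0$ and $G\simeq qL$ with $q\geq 1$, you already have the contradiction $0=G\cdot\Gamma_0=q(L\cdot\Gamma_0)=q\geq 1$, so the final linear-system comparison is not needed (though it is also valid).
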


A surprising feature of this result is that the existence of base points implies that
there is a fixed divisor, i.e. the base locus has a component of codimension one.
This is not true for $K$-trivial manifolds of higher dimension (cf. \cite{Rie20} for manifolds
of $\mbox{K3}^{[2]}$-type), so an interesting first step is to describe the geometry assuming the existence of a fixed divisor. For Calabi-Yau threefolds this is essentially uncharted territory (cf. \cite[App.2]{BN16} or \cite[Thm.1.4]{HS25} for partial results), but the case of hyperk\"ahler manifolds (aka irreducible holomorphic symplectic manifolds) seems more accessible:
in \cite{Rie23} Ulrike Rie\ss\ showed that the fixed part of the linear system
is a prime divisor with negative Beauville-Bogomolov square if the manifold satisfies certain conjectural properties including a strong form of the SYZ conjecture. In this paper we show a slightly weaker version that does not require
conjectural assumptions:

\begin{theorem} \label{theorem:main}
Let $X$ be a hyperk\"ahler manifold, and let $A$ be a nef and big divisor on $X$
such that the linear system $|A|$ has a fixed divisor. Then the following holds:
\begin{itemize}
\item The fixed part of $|A|$ is a reduced divisor\footnote{We give  a precise description of
the fixed part in Theorem \ref{theorem-technical}.}.
\item The linear system $|2A|$ has no fixed part.
\end{itemize}
If $A$ is ample we have a decomposition into mobile and fixed part
$$
A \simeq M + B
$$
where $q(M)=0$ and $B$ is a prime divisor with $q(B) \leq 0$.
\end{theorem}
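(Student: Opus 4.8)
The strategy is to analyze the decomposition $A \simeq M + B$ into mobile and fixed parts via the Beauville--Bogomolov form $q$, using the fact that on a hyperk\"ahler manifold the quadratic form controls the numerical behaviour of divisors, together with Huybrechts' and Boucksom's description of the movable and pseudoeffective cones. First I would write $A = M + B$ where $M$ is the mobile part (so $|M|$ is base-point-free in codimension one, i.e. $\Bs|M|$ has codimension $\geq 2$) and $B \geq 0$ is the fixed divisor, which by the first bullet of the theorem we already know is reduced. Since $A$ is nef and big, $q(A) > 0$. The key numerical input is that $M$, being mobile, lies in the closure of the movable cone, hence $q(M) \geq 0$; and since $M$ is also big (as $A$ is big and $B$ is effective, $M = A - B$ need not be big a priori, but one shows it is, or one argues directly), one wants to push this to the boundary $q(M) = 0$.

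**Key steps.** The heart is to show $q(M) = 0$. I would expand
$$
q(A) = q(M) + 2\, q(M,B) + q(B).
$$
Here $q(M,B) \geq 0$ because $M$ is movable and $B$ is effective (a movable class pairs non-negatively with any effective divisor — this is the birational version of the nef/effective pairing, valid since a movable class is a limit of pushforwards of ample classes under birational maps, and effective divisors remain effective). If one can also show $q(B) \leq 0$, then $q(A) = q(M) + 2q(M,B) + q(B)$ forces control, but to conclude $q(M) = 0$ exactly one needs the sharper statement that $M$ lies on the boundary of the movable cone. This is where Theorem~\ref{theorem-technical} (the promised precise description of the fixed part) should be invoked: presumably it identifies $M$ with a class proportional to a class $L$ defining a rational Lagrangian fibration, or at least places $M$ on an extremal face where $q$ vanishes. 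Concretely, I expect the argument runs: since $|2A|$ has no fixed part (second bullet), $2A = 2M + 2B$ shows $2M + 2B$ is mobile, so $2M + 2B \leq 2M'$ for the mobile part $M'$ of $|2A|$... and comparing $h^0(A)$ with $h^0(2A)$ via the fact that sections of $|A|$ all vanish on $B$, one bootstraps that $M$ cannot be big — for if $M$ were big and mobile with $q(M) > 0$, then $\dim|M| = \dim|A|$ grows quadratically and one derives a contradiction with $B$ being fixed for all multiples. The cleanest route is: $q(M) > 0$ with $M$ movable implies (by Huybrechts/Boucksom, since then $M$ is big and movable) that $M$ is the limit of ample classes on birational models, forcing $B$ to be contracted by these, but a reduced effective divisor with $q(B) \leq 0$ of this type would itself be mobile on some model, contradicting that $B$ is fixed in every $|nA|$. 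Hence $q(M) = 0$.

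**Finishing $q(B) \leq 0$ and the main obstacle.** Once $q(M) = 0$ is established, the relation $0 < q(A) = 2q(M,B) + q(B)$ does not immediately give $q(B) \leq 0$; instead I would argue $q(B) \leq 0$ from the fact that $B$ is fixed: a prime (or reduced) effective divisor $B$ that moves in no linear system $|nA|$, $n \geq 1$, cannot be big, and on a hyperk\"ahler manifold an effective divisor with $q(B) > 0$ is big (again by Huybrechts, since $q > 0$ on the positive cone and effective forces it into the big cone); more precisely, if $q(B) > 0$ then some multiple $mB$ moves, i.e.\ $h^0(mB) \geq 2$, and then $h^0(mA) = h^0(mM + mB) \geq h^0(mM) + (\text{contribution of moving } mB)$ contradicts $B$ being in the fixed locus of $|mA|$. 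That $B$ is \emph{prime} (not just reduced) when $A$ is ample should follow because a reduced fixed divisor with several components would, by the ampleness of $A$ and connectedness/Hodge-index-type arguments on the $q$-form, split off a mobile piece. The main obstacle I anticipate is precisely the rigidity step — turning "$M$ is mobile and $B$ is fixed in all $|nA|$" into the sharp equality $q(M) = 0$ and the primality of $B$ — which genuinely requires the structure theory of the movable cone for hyperk\"ahler manifolds (rational Lagrangian fibrations on the boundary $q = 0$, via work of Matsushita, Hu--Yau, Bayer--Macr\`i and the SYZ-type results now available unconditionally in low dimension) rather than formal positivity manipulations; everything else is bookkeeping with the expansion of $q$ and semicontinuity of $h^0$.
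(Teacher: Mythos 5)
Your proposal has a genuine gap at its core: you never supply the mechanism that converts the equality $h^0(X,\sO_X(M))=h^0(X,\sO_X(A))$ (which holds because $B$ is the fixed part) into the numerical conclusion $q(M)=0$. The paper's engine is the Riemann--Roch identity $\chi(X,\sO_X(D))=P(q(D))$ together with Jiang's theorem that $P$ is strictly increasing, and the vanishing $h^0=\chi$ for classes in $\BKX$ with positive square. Granting these, if $q(M)>0$ then $h^0(M)=P(q(M))$ and $h^0(A)=P(q(A))$ force $q(M)=q(A)$; expanding $q(A)=q(A,M)+q(A,B)\geq q(M)+q(B,M)\geq q(M)=q(A)$ then forces $q(A,B)=q(M,B)=0$, and uniqueness of the divisorial Zariski decomposition gives $B=0$, a contradiction. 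Your substitute arguments do not work: a big movable class on a hyperk\"ahler manifold does not ``contract'' $B$ on a birational model (birational maps between hyperk\"ahler manifolds are isomorphisms in codimension one), and an effective divisor with $q(B)\le 0$, far from ``being mobile on some model'', is rigid precisely when $q(B)<0$. The quadratic-growth heuristic for $\dim|mM|$ likewise proves nothing without the exact identity $h^0=P(q)$.

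Beyond this, two of the three assertions are not actually proved. The reducedness of the fixed part is assumed (``by the first bullet \dots\ we already know''), and the mobility of $|2A|$ is invoked circularly inside your argument for $q(M)=0$ rather than derived. In the paper both rest on Theorem \ref{theorem-technical}, whose proof is the real content: an induction using Markman's integrality of the class $E^\vee=\frac{-2q(E,\bullet)}{q(E)}$ for prime divisors with $q(E)<0$, which pins down all the intersection numbers $q(B_i,B_j)$, shows each multiplicity equals $1$, and exhibits the chain-shaped dual graph; primality of $B$ for $A$ ample then falls out because the chain configuration forces $q(A,B_1)=0$, impossible when $A$ is ample. Nothing in your proposal replaces this arithmetic input; ``connectedness/Hodge-index-type arguments'' will not by themselves yield reducedness or primality. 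Your observations that $q(M,B)\geq 0$ and that $q(B)>0$ would make $B$ move are correct and do appear in the paper, but they are the easy part.
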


Compared to Mayer's theorem \ref{theorem:mayer} this result lacks two features:
up to performing a sequence of flops the mobile part $M$ should define a Lagrangian
fibration onto the projective space, moreover the fixed part should have negative square and therefore be uniruled \cite[Prop.4.7]{Bou04}.
In dimension four we obtain the stronger result: 

\begin{theorem} \label{thm:HK4}
Let $X$ be a smooth hyperk\"ahler fourfold, and let $A$ be a nef and big divisor on $X$ such that the linear system $|A|$ has a fixed divisor. Then we have a decomposition into mobile and fixed part
$$
A \simeq dL+B 
$$ 
where $d \geq 2$ and $B$ is a prime divisor with $q(B)<0$.  
Moreover, up to replacing $X$ by a birational model, the linear system $|L|$ is basepoint free and defines a Lagrangian fibration
$\holom{\varphi}{X}{\PP^2}$ such that $\sO_X(L) \simeq \varphi^* \sO_{\PP^2}(1)$.
\end{theorem}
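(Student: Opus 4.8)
The plan is to bootstrap from the general Theorem~\ref{theorem:main}: in dimension four we already know $A \simeq M + B$ with $q(M) = 0$ and $B$ a prime divisor with $q(B) \leq 0$, and that $|2A|$ is mobile. The first task is to upgrade $q(B) \leq 0$ to $q(B) < 0$. If $q(B) = 0$ were possible, then by the Beauville--Bogomolov quadratic form together with Boucksom's characterization of the movable cone, $B$ would lie on the boundary of the movable cone, forcing $B$ itself to be movable (or a limit of movable classes), contradicting the fact that $B$ is the fixed part. I would make this precise using the structure of the positive cone and the Hodge index-type inequalities for $q$: since $q(A) > 0$ (as $A$ is nef and big), $q(M) = 0$, and $A = M + B$, expanding $q(A) = q(M) + 2q(M,B) + q(B)$ gives $q(A) = 2q(M,B) + q(B)$, so $q(M,B) > 0$; then if $q(B) = 0$ both $M$ and $B$ lie in the closure of the positive cone and pair non-negatively, which by the signature $(1, \rho-1)$ of $q$ forces $M$ and $B$ to be proportional — absurd since $B$ is not big. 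Hence $q(B) < 0$.

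Next I would analyze $M$. Since $q(M) = 0$ and $M$ is movable (indeed $|2A|$ is mobile and $2A = 2M + 2B$ with $2B$ still fixed, so $|2M|$ is mobile, hence $M$ is movable and nef up to birational modification), we are exactly in the situation of the (now proven) Hyperk\"ahler SYZ conjecture in dimension four. The key input here is Bayer--Hassett--Tschinkel / Markman--Yoshioka and especially the recent results establishing that an isotropic nef class on a hyperk\"ahler fourfold of $\mathrm{K3}^{[2]}$ or generalized-Kummer type induces a Lagrangian fibration — and in fact Matsushita's theorem plus the resolution of SYZ in these cases. Concretely: after replacing $X$ by a suitable birational (in fact a sequence of flops) model $X'$ on which the movable class $M$ becomes nef, the divisorial Zariski decomposition is trivial on $M$, $M$ is a nef isotropic class, hence by SYZ some positive multiple of $M$ is basepoint free and defines a Lagrangian fibration $\varphi \colon X' \to \PP^2$. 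Since a hyperk\"ahler fourfold has $b_2 \geq 5 > 3$, the base is $\PP^2$ (Hwang's theorem identifying the base of a Lagrangian fibration with projective space). It then remains to see that we may take $M = L$ primitive with $\sO_{X'}(L) \simeq \varphi^*\sO_{\PP^2}(1)$ and $d \geq 2$; the latter follows because $A$ has a fixed divisor, so $A$ itself is not movable, which forces the coefficient $d$ of the primitive generator to satisfy $d \geq 2$ (if $d=1$ then $A = L + B$ with $L$ basepoint free would push all base points onto $B$, but then a closer analysis — already carried out in the proof of Theorem~\ref{theorem:main} or Theorem~\ref{theorem-technical} — shows $|A|$ would be mobile, a contradiction; alternatively $d \geq 2$ comes for free from the reduced-fixed-part statement combined with $A \simeq M+B$ and $M = dL$).

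The main obstacle I anticipate is the passage from ``$M$ is an isotropic movable class'' to ``$M$ induces a Lagrangian fibration after a birational modification'' — i.e. invoking SYZ in a way that is genuinely unconditional in dimension four. One has to be careful: the statement needs that after flopping, $M$ becomes semiample, and while SYZ for $\mathrm{K3}^{[2]}$-type and Kummer-type fourfolds is known (Bayer--Macr\`i via wall-crossing, and work of Markman, Matsushita, Kamenova--Verbitsky), one should double check that these are the only deformation types of hyperk\"ahler fourfolds relevant here, or else argue via Matsushita's theorem that a nef isotropic line bundle on any projective hyperk\"ahler fourfold is (up to multiple, after birational modification) the pullback of an ample class under an almost holomorphic fibration, and that such a fibration is a genuine morphism in dimension four by the Fujino/Matsushita analysis of Lagrangian fibrations. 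A secondary, more bookkeeping-type obstacle is tracking the precise birational model: one wants a single birational model $X'$ on which $L$ is simultaneously nef, basepoint free, and pulled back from $\PP^2$, which requires running the relevant MMP (a sequence of Mukai flops) and checking it terminates — standard for hyperk\"ahler fourfolds, but worth stating carefully. Finally, identifying the base as $\PP^2$ rather than another surface uses Hwang's theorem (the base of a Lagrangian fibration from a projective hyperk\"ahler manifold is $\PP^n$), which is unconditional, so that step is safe.
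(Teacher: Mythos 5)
Your outline correctly identifies the overall shape of the argument (run an MMP to make the mobile part nef, produce a Lagrangian fibration onto $\PP^2$ via Hwang/Ou/Huybrechts--Xu, then extract $d\geq 2$ and the negativity of $q(B)$), but three of your steps have genuine gaps. First, you start from ``$B$ is a prime divisor with $q(B)\leq 0$'' as if it were already available: Theorem \ref{theorem:main} gives primality of the fixed part only when $A$ is \emph{ample}, whereas here $A$ is merely nef and big. In the paper primality is deduced only \emph{after} the fibration is constructed, since then $M\simeq dL$ with $d\geq 2$ is non-primitive in the Picard lattice and Corollary \ref{cor:notprimitive} applies. Second, your argument for $q(B)<0$ does not work: you correctly compute $q(M,B)>0$, but then claim that two isotropic classes in the closure of the positive cone pairing \emph{non-negatively} must be proportional. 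The signature $(1,\rho-1)$ forces proportionality only when the pairing is \emph{zero} (Lemma \ref{lemma:HI0}); two isotropic classes spanning a hyperbolic plane pair positively and are not collinear, so there is no contradiction and the case $q(B)=0$ survives your argument. (Nor does $q(B)=0$ make $B$ movable in the sense of having a moving linear system --- Lemma \ref{lemma:h-chi} and Corollary \ref{cor:Jiang} require $q>0$.) The paper excludes $q(B)=0$ only at the very end, using the fibration: if $q(B)=0$ then $L+B\in\BKX$ with $q(L+B)=\tfrac1d q(M,B)>0$, so $h^0(L+B)>3$ by Corollary \ref{cor:Jiang}, contradicting $h^0(L+B)=h^0(L)=3$ because $B$ is fixed in $|L+B|$.

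Third, and most seriously, you invoke the ``now proven Hyperk\"ahler SYZ conjecture in dimension four''; the paper states explicitly that this conjecture is open for hyperk\"ahler fourfolds, and restricting to $K3^{[2]}$ and Kummer type is not legitimate since the classification of deformation types in dimension four is unknown, while your fallback (``Matsushita's theorem'' turning a nef isotropic class into an almost holomorphic fibration) is essentially a restatement of SYZ. What actually saves the day is the \emph{weak} form: after the MMP of \cite{MZ13} one has $h^0(X',\sO_{X'}(M'))=h^0(X',\sO_{X'}(A'))\geq 4$ by Corollary \ref{cor:Jiang}, hence $\kappa(X',M')\geq 1$, and then Fukuda's theorem \cite[Thm.1.5]{Fuk02} applied to $(X',\epsilon M')$ gives semiampleness of the nef divisor $M'$ unconditionally. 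You never use the section count $h^0\geq n+2$, which is precisely the input that makes the argument work without SYZ. Finally, the identification $M'\simeq f^*dH$ with $d\in\N$ (rather than just $M'\sim_\Q \lambda f^*H$) needs the argument that a general member of $|M'|$ has no component over the discriminant and is therefore a full fibre preimage; and $d\geq 2$ then follows cleanly from $h^0(M')>3=h^0(\PP^2,\sO_{\PP^2}(H))$, which is sharper than your somewhat circular ``a closer analysis shows $|A|$ would be mobile.''
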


Note that the hyperk\"ahler SYZ conjecture (which is part of the assumptions in \cite{Rie23}) is not known for hyperk\"ahler fourfolds. A key contribution of this paper is to develop arguments that avoid appealing to this very difficult open problem. The following classical example shows that the situation described in 
Theorem \ref{thm:HK4} exists:

\begin{example} \label{example:Beauville-Mukai} \cite[Example 0.5]{Muk84} \cite{Bea91}
Let $S$ be a smooth K3 surface with $\pic{S} \simeq \Z H_S$ where $H_S$ is an ample divisor
and $H_S^2=2g-2$. The linear system $|H_S|$ has dimension $g$ and 
the universal family of Jacobians $\mbox{\rm Jac}(C_t)$ for $C_t \in |H_S|$ general compactifies
to a hyperk\"ahler manifold $X$ of dimension $2g$ with a Lagrangian fibration
$$
\holom{f}{X}{|H_S| \simeq \PP^g}.
$$
The symmetric powers $C_t^{(g-1)}$ determine a divisor $B \subset X$ such that
the restriction of $B$ to a general fibre  $\mbox{\rm Jac}(C_t)$ is a principal polarisation.
Thus for all $d \in \N$ the divisor $B$ is contained
in the fixed part of
$$
A := f^* dH + B
$$
where $H$ is the hyperplane class on $\PP^g$. It is known that $q(B)=-2$ and
$f^* dH + B$ is nef and big for $d \geq 2$ (cf. \cite{Rie20} for a very detailed exposition
of the case $g=2$).
\end{example}

A consequence of Mayer's theorem \ref{theorem:mayer} is that the linear system $|2A|$ is always basepoint free, in higher dimension this is not clear.
Basepoint freeness was studied by Rie\ss \ \cite{Rie20} and Varesco \cite{Var23} for low-dimensional hyperk\"ahler manifolds of $K3^{[n]}$ and Kummer type, but their results
only apply to general members in the moduli space. 

\subsection{Strategy of proof and further research}

Let $X$ be a hyperk\"ahler manifold, and denote by $q(\bullet)$ the Beauville-Bogomolov form on $X$ (BB-form for short).
Let $A$ be a nef and big divisor such that the linear system $|A|$ has a fixed part $B \neq 0$.
The basic strategy of our proof is the one of Rie\ss' inspiring paper \cite{Rie23}, a natural generalisation of the arguments for K3 surfaces \cite[Section 2]{Huy16}: use vanishing theorems for movable divisors and properties of the Huybrechts-Riemann-Roch polynomial $P(q(\bullet))$ to obtain restrictions on the BB-square of the mobile and fixed part.
A brilliant recent paper of Chen Jiang \cite{Jia23} showing that $P$ is a strictly
increasing function allows us to compare dimensions of movable linear systems just by knowing
their BB-square. With this in mind our first idea is that we should not start the investigation by considering
$B$ as a rigid divisor (i.e. as an effective divisor with $h^0(X, \sO_X(B))=1$), but use the stronger property that $B$ is the fixed part of a movable linear system.
The second idea is that we can replace the divisibility of the mobile part by 
easy arithmetic properties of the intersection numbers provided by Markman's theory (cf. Lemma 
\ref{lemma:divisibility}).
Combined with the index theorem for the BB-form this quickly leads to numerous restrictions
on the intersection numbers (e.g. Lemma \ref{lemma:key}), modulo a certain computational effort
we then obtain a precise description of the fixed part in Theorem \ref{theorem-technical}.

Theorem \ref{theorem:main} is not optimal, in particular 
we expect that the fixed part is always a prime divisor even if $A$ is merely nef and big.
This would follow from the following

\begin{conjecture} 
Let $X$ be a hyperk\"ahler manifold of dimension $2n$, and let $M$ be a nef divisor on $X$
such that $q(M)=0$ and $h^0(X, \sO_X(M)) \geq n+2$. Then $M$ is divisible in the Picard group
$\pic{X}$.
\end{conjecture}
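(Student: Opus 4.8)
The plan is to deduce the conjecture from the hyperk\"ahler SYZ conjecture; this is precisely why the statement is recorded here as a conjecture, since SYZ is open in general (it is a theorem for the $K3^{[n]}$- and generalized Kummer deformation types, by work of Bayer--Macr\`i, Markman, Matsushita, Yoshioka and others). Arguing by contradiction, suppose $M$ is primitive in $\pic X$. Since $h^0(X,\sO_X(M)) \geq n+2 \geq 3$ the class $M$ is nonzero, and being nef with $q(M)=0$ the SYZ conjecture provides $m \geq 1$ such that $|mM|$ is basepoint free. As $M \neq 0$ and $q(M)=0$ one has $M^n \neq 0$ and $M^{n+1}=0$, so the Stein factorisation $\holom{f}{X}{Y}$ of the morphism attached to $|mM|$ has $\dim Y = n$; moreover $f$ has connected fibres, $f_*\sO_X = \sO_Y$, and $Y$ is normal projective. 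By Matsushita's theorem $f$ is a Lagrangian fibration, in particular equidimensional with general fibre an abelian $n$-fold, and by Hwang's theorem -- with the more recent work removing the smoothness hypothesis on the base -- one has $Y \cong \PP^n$. Hence $\sO_X(mM) \simeq f^*\sO_{\PP^n}(k)$ for some $k \geq 1$, and $M$ is $\bQ$-proportional to the nef class $f^*\sO_{\PP^n}(1)$.

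The second step is to bound $h^0(X,\sO_X(M))$. Since $M$ is primitive in the torsion-free group $\pic X$ and $\bQ$-proportional to $f^*\sO_{\PP^n}(1)$, it is the primitive generator of that ray, so $f^*\sO_{\PP^n}(1) \simeq \beta M$ for an integer $\beta \geq 1$. If $\beta = 1$ then $M \simeq f^*\sO_{\PP^n}(1)$, and the projection formula together with $f_*\sO_X = \sO_{\PP^n}$ gives $h^0(X,\sO_X(M)) = h^0(\PP^n,\sO_{\PP^n}(1)) = n+1$. If $\beta \geq 2$ I claim $h^0(X,\sO_X(M)) = 0$: any effective $D \in |M|$ is vertical, for otherwise its restriction to a general fibre $F$ would be a nonzero effective divisor whose class is numerically trivial on the abelian variety $F$, which is absurd; then $\beta D$ lies in $|f^*\sO_{\PP^n}(1)| = f^*|\sO_{\PP^n}(1)|$ and hence equals $f^*H$ for a hyperplane $H$, but $f^*H$ always contains a component of multiplicity one (the fibres of a Lagrangian fibration being reduced in codimension one), forcing $\beta = 1$, a contradiction. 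In both cases $h^0(X,\sO_X(M)) \leq n+1$, against the hypothesis $h^0(X,\sO_X(M)) \geq n+2$. So $M$ is not primitive, i.e. $M$ is divisible in $\pic X$. One in fact expects $\beta \geq 2$ never to occur -- equivalently $f^*\sO_{\PP^n}(1)$ to be primitive in $\pic X$ -- which would make the last argument shorter, but this refinement is not needed.

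The only serious obstacle is the use of the SYZ conjecture: all the remaining ingredients are classical, but SYZ is open in general and, as recalled after Theorem \ref{thm:HK4}, not even known for hyperk\"ahler fourfolds -- the whole point of the rest of this paper is to reach the unconditional conclusions of Theorems \ref{theorem:main} and \ref{thm:HK4} while avoiding it. One might instead try to argue ``from below'', bounding $h^0(X,\sO_X(M))$ by $\chi(X,\sO_X(M)) = n+1$ via a vanishing theorem for the higher cohomology of the nef class $M$; but $M$ is not big, and the only vanishing that seems strong enough -- Matsushita's computation of $R^q f_*\sO_X$ combined with Bott vanishing on $\PP^n$ -- again presupposes SYZ. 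So the conjecture appears at present to require the conclusion of SYZ for the class $M$ as a genuine input.
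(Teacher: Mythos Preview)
The statement is recorded in the paper as a \emph{conjecture}; the paper offers no proof of it and does not claim one. What you have written is explicitly a reduction rather than a proof, and you say so. Granted the two inputs you use---(i) semiampleness of $M$ (the weak SYZ conjecture for this class, which applies since $h^0(X,\sO_X(M))\geq n+2\geq n+1$), and (ii) the identification of the base of the resulting Lagrangian fibration with $\PP^n$---your case split on $\beta$ is correct: for $\beta=1$ the projection formula gives $h^0(X,\sO_X(M))=n+1$, and for $\beta\geq 2$ the equality $\beta D=f^*H$ together with the fact that $f^*H$ has a reduced component over the smooth locus of $f$ forces $|M|=\emptyset$. Both contradict $h^0\geq n+2$.

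One correction of emphasis: you present (ii) as essentially settled (``Hwang's theorem---with the more recent work removing the smoothness hypothesis on the base''), but in arbitrary dimension the conclusion $Y\cong\PP^n$ without first knowing $Y$ smooth is still open; the references \cite{Ou19,HX22} treat dimension four and \cite{MX25} only partial cases. So your reduction rests on \emph{two} genuinely conjectural inputs, not one, and this is precisely why the paper records the statement as a conjecture rather than as a corollary of SYZ. Since there is no proof in the paper to compare against, there is nothing further to say on that score.
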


For the proof of Theorem \ref{thm:HK4} we first observe that the existence of a Lagrangian
fibration follows from 

\begin{conjecture} (Weak hyperk\"ahler SYZ conjecture)
Let $X$ be a (not necessarily projective) hyperk\"ahler manifold of dimension $2n$, and let $M$ be a nef divisor on $X$
such that $q(M)=0$ and $h^0(X, \sO_X(M)) \geq n+1$. Then $M$ is semiample, i.e. some positive multiple of $M$ is globally generated.
\end{conjecture}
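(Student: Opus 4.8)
The plan is to reduce the statement to Matsushita's structure theorem for fibrations on hyperk\"ahler manifolds, the real content being an abundance statement for the nef isotropic class $M$. First I would record the numerical constraints forced by $q(M)=0$. By the Fujiki relation $\int_X M^{2n}=c_X\,q(M)^n=0$, so $M$ is nef but not big, and by Verbitsky's description of the subalgebra of $H^*(X,\bC)$ generated by $H^2(X,\bC)$ every class $\alpha\in H^2(X,\bC)$ with $q(\alpha)=0$ satisfies $\alpha^{n+1}=0$; in particular $M^{n+1}=0$. Comparing the coefficients of $t^n$ in the identity $\int_X(M+t\omega)^{2n}=c_X\,q(M+t\omega)^n$ for a K\"ahler class $\omega$ gives $\binom{2n}{n}\int_X M^n\omega^n=c_X\,(2\,q(M,\omega))^n>0$, since $q(M,\omega)>0$ for $M$ nef and nonzero, so $M^n\neq0$. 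Hence $M$ is a nef class of numerical dimension exactly $\nu(M)=n$, and the conjecture is precisely the assertion that $\kappa(M)=\nu(M)=n$ together with semiampleness.

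The main line of attack is to exploit the hypothesis $h^0(X,\sO_X(M))\ge n+1$ through the rational map $\varphi_{|M|}\colon X\dashrightarrow Y\subset\PP^N$ with $N\ge n$. The upper bound $\dim Y\le n$ is free, since $\dim Y\le\kappa(M)\le\nu(M)=n$ by the general inequality between Iitaka and numerical dimension, and the presence of a nonconstant section gives $\dim Y\ge1$. Resolving the base locus by $\pi\colon\tilde X\to X$ and writing $\pi^*M=M'+F$ with $M'$ the basepoint-free mobile part and $F\ge0$ the fixed part, the class $M'=\tilde\varphi^*\sO_Y(1)$ is nef and, since $M$ is not big, neither is $M'$, so $(M')^{2n}=0$ and the largest power of $M'$ not killed equals $\dim Y$. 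The crux is then to prove $\dim Y=n$. Granting this, Stein-factorizing $\tilde X\to Y$ into a fibration $g\colon\tilde X\to Z$ with connected fibres over a normal $n$-dimensional base, Matsushita's theorem identifies $g$ as a Lagrangian fibration with $Z$ Fano of Picard number one (indeed $Z\simeq\PP^n$ in the known deformation types by Hwang), and $M'=g^*H$ for an ample $H$ on $Z$; in particular $M'$ is semiample. One must then descend: show that $F$ is $g$-trivial and that $\varphi_{|mM|}$ is already a morphism on $X$ for some $m>0$, so that $M$ itself is semiample.

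The hard part is twofold, and it is exactly where the full SYZ conjecture resists proof. First, passing from the section estimate $h^0(X,\sO_X(M))\ge n+1$ (which yields only $\kappa(M)\ge1$) to $\dim Y=n$ is not formal: it amounts to the abundance-type equality $\kappa(M)=\nu(M)$ for a nef isotropic class, and naive positivity estimates do not rule out that the sections concentrate the image in intermediate dimension. Second, even once a Lagrangian fibration is produced on a resolution, descending semiampleness to $X$ and clearing the fixed part $F$ requires controlling the birational geometry of $X$ along $M^{\perp}$, i.e.\ the structure of the boundary of the movable cone. Plausible tools to close these gaps are Verbitsky's twistor-deformation argument, which propagates semiampleness of an isotropic nef class across the twistor family, combined with a deformation-invariance statement for semiampleness in families of hyperk\"ahler manifolds; alternatively, a Markman-style monodromy reduction to a deformation type where the Lagrangian fibration is already known (the Beauville--Mukai systems of Example \ref{example:Beauville-Mukai}) followed by a specialization argument. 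I expect the decisive obstacle to be the first gap---manufacturing a morphism with $n$-dimensional image from the cohomological data alone---since this is precisely the assertion for which no unconditional proof is known in dimension $\ge4$.
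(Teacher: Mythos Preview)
The statement you are attempting to prove is labelled a \emph{Conjecture} in the paper and is not proved there in general. The authors only remark that in dimension four it follows from established abundance-type results (Fukuda's log abundance \cite{Fuk02} in the projective case, \cite{COP10} and \cite{HLL25} in the K\"ahler case); in higher dimension it remains open. There is therefore no ``paper's own proof'' to compare against, and what you have written is a research outline rather than a proof.

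Your sketch is an honest survey of the obstacles, and you correctly isolate the two genuine gaps: upgrading the section bound $h^0(X,\sO_X(M))\ge n+1$ to $\kappa(M)=n$ (abundance for the nef isotropic class), and descending semiampleness from a birational model back to $X$. These are exactly the reasons the statement is a conjecture. The twistor-deformation and monodromy-specialisation strategies you mention are plausible attack lines but are not known to close either gap.

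One technical point deserves flagging beyond what you already acknowledge. You propose to resolve the base locus via $\pi\colon\tilde X\to X$, Stein-factorise $\tilde X\to Y$ through $g\colon\tilde X\to Z$, and then invoke Matsushita's theorem to identify $g$ as a Lagrangian fibration with Fano base. But Matsushita's theorem requires the source to be irreducible holomorphic symplectic, and $\tilde X$ is not: any nontrivial blow-up introduces an exceptional divisor along which $K_{\tilde X}$ is effective, so the symplectic form does not extend. Thus even granting $\dim Y=n$, you cannot apply Matsushita on $\tilde X$; the theorem only becomes available once you already know that some $|mM|$ is basepoint-free on $X$ itself, which is precisely the semiampleness you are trying to establish. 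In your outline the appeal to Matsushita is therefore logically downstream of the conclusion, not an intermediate step toward it.
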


Contrary to the classical hyperk\"ahler SYZ conjecture (e.g. \cite[Conj.1.2]{DHMV24}) this follows  in dimension four from well-established MMP techniques (\cite[Thm.1.5]{Fuk02} and \cite[Thm.1.3]{COP10}, \cite[Thm.4.1]{HLL25} for the K\"ahler case). The second ingredient is the description of the base variety of Lagrangian fibrations \cite{Hwa08, Ou19, HX22}, an open but maybe accessible problem in arbitrary dimension (cf. \cite{MX25} for recent progress). This geometric information     determines the dimensions of the linear systems and allows us to deduce the numerical properties of mobile and fixed part.

Theorem \ref{thm:HK4} shows that the presence of a fixed divisor is closely related
to the existence of a Lagrangian fibration on (a birational model of) the hyperk\"ahler manifold.
It seems likely that, as in Example \ref{example:Beauville-Mukai}, the fixed divisor $B$ restricts
to a principal polarisation on the general fibre. 

It is natural to ask for a generalisation of Theorem \ref{theorem:main} to singular irreducible holomorphic symplectic varieties, but there are at least two obstacles: in general
ample Weil $\Q$-Cartier divisor do not enjoy the same effective nonvanishing properties
as ample Cartier divisors \cite{Xie05}. Moreover Jiang's theorem \cite{Jia23} is only known in the smooth case.

\noindent\textbf{Acknowledgements.} 
We thank the participants of the POK0 workshop in Cetraro for stimulating discussion that initiated this project. We thank V. Benedetti and F. Denisi for comments on a first version of this text.

The authors were partially supported by the ANR-DFG project ``Positivity on K-trivial varieties'', ANR-23-CE40-0026 and DFG Project-ID 530132094. A.H.\ was also partially supported  by the France 2030 investment plan managed by the ANR, as part of the Initiative of Excellence of Universit\'e C\^ote d'Azur, reference ANR-15-IDEX-01.

\section{Notations and basic facts}

We work over the complex numbers, for general definitions we refer to \cite{Har77}. 
Varieties will always be supposed to be irreducible and reduced. 
We use the terminology of \cite{Deb01, KM98}  for birational geometry and notions from the minimal model program. We follow \cite{Laz04a} for algebraic notions of positivity.

A divisor is always a $\Z$-divisor,
and we denote by $\simeq$ the linear equivalence.
For intersection computations with the Beauville-Bogomolov form we identify a divisor $D$ with its Chern class $c_1(D)$.

We denote by $\N$ the natural numbers including zero, and by $\N^*$ the positive natural numbers.

\begin{definition}
Let $X$ be a projective manifold. 
\begin{itemize}
\item  A divisor $D$ is mobile if the base scheme $\Bs(|D|)$ has codimension at least two in $X$. If $D$ is not mobile, we call fixed part of $|D|$ the largest effective divisor $B$ contained in
$\Bs(|D|)$.
\item An effective divisor $B$ is fixed if there exists a nef and big divisor $A$ such that $B$ is contained in the fixed part
of $|A|$.
\end{itemize}
\end{definition}

\begin{lemma} \label{lemma:strict-increase}
Let $X$ be a projective manifold and $M$ a divisor on $X$ such that $h^0(X, \sO_X(M)) \geq 2$
Then $h^0(X, \sO_X(2M))> 2h^0(X,\sO_X(M))-2\geq h^0(X, \sO_X(M))$.
\end{lemma}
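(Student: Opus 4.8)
The plan is to deduce the statement from the multiplication map on sections, together with an induction on the dimension of the linear system. The second inequality, $2h^0(X,\sO_X(M))-2\ge h^0(X,\sO_X(M))$, merely restates the hypothesis $h^0(X,\sO_X(M))\ge 2$, so the content is $h^0(X,\sO_X(2M))\ge 2h^0(X,\sO_X(M))-1$. Since the image of the multiplication map $\Sym^2 H^0(X,\sO_X(M))\to H^0(X,\sO_X(2M))$ is a subspace of $H^0(X,\sO_X(2M))$, it is enough to bound its dimension from below, and I will prove the following slightly more flexible claim by induction on $n$: \emph{for every line bundle $L$ on a projective manifold $X$ and every subspace $W\subseteq H^0(X,L)$ with $n:=\dim W\ge 2$, the multiplication map $\mu\colon\Sym^2 W\to H^0(X,L^{\otimes 2})$ satisfies $\dim\im\mu\ge 2n-1$.} Applying this with $L=\sO_X(M)$ and $W=H^0(X,\sO_X(M))$ gives the lemma.

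For the base case $n=2$, write $W=\langle a,b\rangle$ with $a,b$ linearly independent, hence both nonzero. Then $a^2,ab,b^2$ are linearly independent in $H^0(X,L^{\otimes 2})$: restricting a relation $\alpha a^2+\beta ab+\gamma b^2=0$ to a nonempty open set on which $L$ is trivial and passing to the function field $\C(X)$, if $\alpha\ne 0$ then $a/b\in\C(X)$ satisfies a nonzero quadratic polynomial over $\C$, so $a/b\in\C$ as $\C$ is algebraically closed, contradicting independence; and if $\alpha=0$ then $b(\beta a+\gamma b)=0$ forces $\beta a+\gamma b=0$ since $\C(X)$ is a domain, so $\beta=\gamma=0$.

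For the inductive step let $n\ge 3$. Since $\dim W\ge 2$, the rational map $\varphi_{|W|}\colon X\dashrightarrow\PP(W^{*})$ is non-constant, so in characteristic zero its differential is nonzero at a general point $p$ of $X\setminus\Bs|W|$. Put $V_1:=W\cap H^0(X,L\otimes\I_p)$ and $V_2:=W\cap H^0(X,L\otimes\I_p^2)$. As $p\notin\Bs|W|$, evaluation at $p$ is surjective on $W$, whence $\dim V_1=n-1$; and as $\varphi_{|W|}$ has nonzero differential at $p$, some section of $W$ vanishes at $p$ without being singular there, so $V_2\subsetneq V_1$ and $\dim V_2\le n-2$. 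Fix $s_0\in W$ with $s_0(p)\ne 0$; note multiplication by $s_0$ is injective on sections, so $\dim(s_0 W)=n$ and $\dim(s_0 V_2)=\dim V_2$. Any product of two sections from $V_1$ vanishes to order at least two at $p$, so $\mu(\Sym^2 V_1)\subseteq H^0(X,L^{\otimes 2}\otimes\I_p^2)$; and since $s_0$ is a unit in $\sO_{X,p}$, a section $s_0w$ with $w\in W$ vanishes to order at least two at $p$ if and only if $w\in V_2$, so $s_0 W\cap H^0(X,L^{\otimes 2}\otimes\I_p^2)=s_0 V_2$. Since $\im\mu$ contains both $\mu(\Sym^2 V_1)$ and $s_0 W$, and their intersection is contained in $s_0 V_2$, we obtain
$$
\dim\im\mu\ \ge\ \dim\mu(\Sym^2 V_1)+\dim(s_0 W)-\dim(s_0 V_2)\ \ge\ (2n-3)+n-(n-2)\ =\ 2n-1,
$$
where $\dim\mu(\Sym^2 V_1)\ge 2(n-1)-1$ is the inductive hypothesis applied to $V_1\subseteq H^0(X,L)$ (valid since $\dim V_1=n-1\ge 2$). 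This completes the induction.

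The only delicate point is the inequality $\dim V_2\le n-2$: with only $\dim V_2\le n-1$ the computation above would give merely $\dim\im\mu\ge 2n-2$. This is exactly where characteristic zero is needed --- a non-constant rational map to projective space has nonzero differential at a general point, equivalently, not every section of $W$ vanishing at a general point $p$ is singular at $p$; if it were, the differential of the ratio of two fixed independent sections of $W$ would vanish identically, forcing that ratio to be constant. The base case, in turn, uses that $\C$ is algebraically closed.
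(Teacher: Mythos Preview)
Your proof is correct. The inductive argument via a general point $p$ and the filtration $V_2\subsetneq V_1\subsetneq W$ by order of vanishing is sound, and the key strict inequality $\dim V_2\le n-2$ is properly justified by generic smoothness in characteristic zero.

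The paper, however, disposes of the lemma in one line: the multiplication map
\[
H^0(X,\sO_X(M))\otimes H^0(X,\sO_X(M))\longrightarrow H^0(X,\sO_X(2M))
\]
is injective when restricted to each tensor factor (since $X$ is integral, $st=0$ forces $s=0$ or $t=0$), and then the Hopf inequality \cite[p.~108]{ACGH85} gives $\dim(\mathrm{image})\ge 2h^0(M)-1$. So what you have done is essentially reprove the Hopf inequality in the special symmetric case $V=W$, by a geometric induction rather than the standard linear-algebra argument. Your route is self-contained and makes the role of characteristic zero explicit, but the paper's citation is shorter and in fact more general (Hopf works for two different factors and does not need the ambient variety at all---only that the product of nonzero elements is nonzero). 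If you want to keep your argument, it would be worth remarking that it is a proof of (a case of) Hopf's lemma; otherwise the citation suffices.
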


\begin{proof}
The multiplication map 
$$
H^0(X,\sO_X(M)) \otimes H^0(X,\sO_X(M)) \to H^0(X,\sO_X(2M))
$$ 
is injective separately on each factor, so that the result follows from the Hopf inequality 
\cite[p.108]{ACGH85}.
\end{proof}

For the rest of this section 
let $X$ be a hyperk\"ahler manifold of dimension $2n$, and denote by 
$$
q : H^2(X, \Z) \rightarrow \Z
$$
the Beauville-Bogomolov quadratic form on $X$. Somewhat abusively we denote by the same letter the extension to $H^2(X, \R)$ and by $q(.,.)$ the associated bilinear form.
The restriction of the bilinear form to $H^{1,1}(X, \R)$ has index $(1, b_2(X)-3)$ \cite[Thm.5]{Bea83}. Let us recall two well-known consequences (see e.g. \cite[Lemma 3.1]{MY15} for a proof):

\begin{lemma} \label{lemma:HI0}
Let $\alpha_1, \alpha_2 \in H^{1,1}(X, \R)$
 be $(1,1)$-classes such that
$$
0 = q(\alpha_1)=q(\alpha_2)=q(\alpha_1, \alpha_2).
$$
Then $\alpha_1$ and $\alpha_2$ are colinear in $H^{1,1}(X, \R)$.
\end{lemma}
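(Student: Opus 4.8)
The plan is to reduce the statement to the elementary linear algebra of a quadratic form of Lorentzian signature, using nothing beyond the index theorem \cite[Thm.5]{Bea83} for $q$ on $H^{1,1}(X,\R)$ already recalled above. The mechanism is the fact that a form of signature $(1,k)$ has Witt index one, i.e. it carries no totally isotropic subspace of dimension $\geq 2$; the hypotheses $q(\alpha_1)=q(\alpha_2)=q(\alpha_1,\alpha_2)=0$ are precisely what is needed to produce such a subspace unless $\alpha_1,\alpha_2$ are colinear.

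Concretely, I would first dispose of the case $\alpha_1=0$, in which the conclusion is trivial, and assume $\alpha_1\neq 0$. Next I would observe that $V:=\R\alpha_1+\R\alpha_2\subseteq H^{1,1}(X,\R)$ is totally isotropic for $q$: for all $a,b\in\R$,
$$
q(a\alpha_1+b\alpha_2)=a^2 q(\alpha_1)+2ab\,q(\alpha_1,\alpha_2)+b^2 q(\alpha_2)=0.
$$
Then comes the key step: since the restriction of $q$ to $H^{1,1}(X,\R)$ has index $(1,b_2(X)-3)$, every totally isotropic subspace has dimension at most $1$. One clean way to see this: pick a class $h$ with $q(h)>0$ (it exists by the signature); then $q$ is negative definite on $h^{\perp}$, so a totally isotropic subspace of dimension $\geq 2$ would meet $h^{\perp}$ in a nonzero subspace on which $q$ is simultaneously identically zero and negative definite — absurd. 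Applying this to $V$ forces $\dim V\leq 1$, and since $\alpha_1\neq 0$ we get $V=\R\alpha_1$, hence $\alpha_2\in\R\alpha_1$, which is the asserted colinearity. An equivalent route, if preferred, is to pass to $\alpha_1^{\perp}$, note its radical is exactly $\R\alpha_1$ and the induced form on $\alpha_1^{\perp}/\R\alpha_1$ is negative definite, and observe that the image of $\alpha_2$ there is isotropic, hence zero.

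I do not anticipate a real obstacle here: the content is purely the computation of the Witt index of a Lorentzian form, and the only geometric input is the signature statement which is already cited. The single point deserving a careful sentence is exactly that Witt-index argument (the parenthetical paragraph above), since it is what converts "index $(1,k)$" into "no isotropic plane"; everything else is formal.
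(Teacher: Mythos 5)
Your argument is correct: the hypotheses make $\R\alpha_1+\R\alpha_2$ totally isotropic, and the signature $(1,b_2(X)-3)$ of $q$ on $H^{1,1}(X,\R)$ forbids an isotropic plane (your reduction via $h^{\perp}$, on which $q$ is negative definite, is the standard way to see this). The paper does not write out a proof but simply cites \cite[Lemma 3.1]{MY15}, and the argument there is exactly this Witt-index computation, so your proposal matches the intended proof and is complete as written.
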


\begin{lemma} \label{lemma:HI1}
Let $\alpha_1, \alpha_2 \in H^{1,1}(X, \R)$
be non-zero $(1,1)$-classes such that
$$
q(\alpha_1)>0, \qquad q(\alpha_2)=0
$$
Then $q(\alpha_1, \alpha_2) \neq 0$.
\end{lemma}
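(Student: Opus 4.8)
The plan is to deduce this directly from the index statement $(1, b_2(X)-3)$ for the Beauville-Bogomolov form on $H^{1,1}(X,\R)$, using Lemma \ref{lemma:HI0} as the key intermediate fact. First I would argue by contradiction: suppose $q(\alpha_1)>0$, $q(\alpha_2)=0$, and $q(\alpha_1,\alpha_2)=0$. Since $\alpha_1$ spans a one-dimensional positive-definite subspace, the orthogonal complement $\alpha_1^\perp \subset H^{1,1}(X,\R)$ is negative definite (this is exactly what the index $(1, b_2(X)-3)$ buys us). But $\alpha_2 \in \alpha_1^\perp$ by hypothesis, and $q(\alpha_2)=0$; negative definiteness then forces $\alpha_2=0$, contradicting the assumption that $\alpha_2$ is non-zero.

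Alternatively, if one prefers to route through Lemma \ref{lemma:HI0}: the hypotheses $q(\alpha_1,\alpha_2)=0$ and $q(\alpha_2)=0$ do not immediately give $q(\alpha_1)=0$, so Lemma \ref{lemma:HI0} is not literally applicable to the pair $(\alpha_1,\alpha_2)$. Instead one could pass to $\alpha_1' := \alpha_1 - \lambda \alpha_2$ for a suitable scalar, but this doesn't make $q(\alpha_1')$ vanish either, so the cleanest route really is the direct index argument above rather than an appeal to \ref{lemma:HI0}. I would therefore present the contradiction via the signature directly.

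There is essentially no obstacle here: the only subtlety to state carefully is that a vector with positive square spans a maximal positive-definite subspace (since the positive index is exactly $1$), hence its orthogonal complement is negative definite and in particular contains no non-zero isotropic vectors. The one-line proof is: $\alpha_2 \in \alpha_1^\perp$, the form is negative definite on $\alpha_1^\perp$, and $q(\alpha_2)=0$ forces $\alpha_2 = 0$, a contradiction. This also makes transparent why the statement fails without the hypothesis $\alpha_1 \neq 0$ or without $q(\alpha_1)>0$: one genuinely needs $\alpha_1$ to account for the entire positive part of the signature.
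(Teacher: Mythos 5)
Your argument is correct and is exactly the standard justification the paper has in mind: the paper does not write out a proof but simply cites the index $(1,b_2(X)-3)$ of $q$ on $H^{1,1}(X,\R)$ (with a pointer to \cite[Lemma 3.1]{MY15}), and your observation that $\alpha_1^\perp$ is negative definite, hence contains no non-zero isotropic vector, is the intended one-line deduction. You are also right that Lemma \ref{lemma:HI0} is not the natural tool here, since its hypothesis $q(\alpha_1)=0$ fails.
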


We also note

\begin{lemma} \label{lemma:HI2}
Let $A$ be an ample divisor on $X$.
If $D \subset X$ is an effective divisor with $q(A, D)=0$, then $D=0$.
\end{lemma}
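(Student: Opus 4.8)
The plan is to prove the contrapositive: if $D$ is effective and $D \neq 0$, then $q(A, D) > 0$. Two positivity inputs will be combined. First, since $A$ is ample it lies in the positive cone of $H^{1,1}(X, \R)$ — one of the two components of $\{q > 0\}$ allowed by the index theorem recalled above — so $q(A) > 0$. Second, writing $D = \sum_i a_i D_i$ with $a_i \in \N^*$ and $D_i$ prime, each $D_i$ is a $(2n-1)$-dimensional subvariety and $A|_{D_i}$ is ample on it, so $\int_{D_i} (A|_{D_i})^{2n-1} > 0$; summing over $i$ gives $A^{2n-1} \cdot D > 0$. This is the one place where ampleness of $A$, rather than just nefness and bigness, is used.

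Next I would invoke the Fujiki relation: there is a constant $c_X > 0$ with $\int_X \alpha^{2n} = c_X\, q(\alpha)^n$ for all $\alpha \in H^2(X, \R)$. Polarising, that is, expanding both sides of $\int_X (A + tD)^{2n} = c_X\, q(A + tD)^n$ as polynomials in $t$ and comparing the coefficients of $t$, one obtains
$$
A^{2n-1} \cdot D = c_X\, q(A)^{n-1}\, q(A, D).
$$
The left-hand side is strictly positive by the previous paragraph, while $c_X > 0$ and $q(A)^{n-1} > 0$; hence $q(A, D) > 0$, and in particular $q(A, D) \neq 0$. Equivalently, the hypothesis $q(A, D) = 0$ forces $D = 0$.

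I do not expect a genuine obstacle here: the argument is just the Fujiki relation together with the elementary fact that an ample class has positive degree along every subvariety. The only point worth flagging is that the hypothesis that $A$ is ample (and not merely nef and big) is essential, since $A^{2n-1} \cdot D$ can vanish for a nonzero effective $D$ when $A$ is only nef and big — for instance when $A$ is the pullback of an ample class under a birational contraction and $D$ is an exceptional divisor mapping to a subvariety of dimension $\leq 2n-2$.
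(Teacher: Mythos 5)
Your argument is correct, but it follows a different route from the paper's. You use the Fujiki relation $\int_X \alpha^{2n} = c_X q(\alpha)^n$ (which the paper never invokes explicitly), polarise it to get $A^{2n-1}\cdot D = c_X\,q(A)^{n-1}q(A,D)$, and combine this with the elementary positivity $A^{2n-1}\cdot D>0$ for $A$ ample and $D$ effective nonzero; the coefficient comparison checks out (both sides carry the same factor $2n$, which cancels). The paper instead argues by contradiction purely inside the lattice-theoretic framework it has already set up: if $D\neq 0$ then $q(A,D)=0$ and the index theorem force $q(D)<0$ (the orthogonal complement of the positive class $A$ is negative definite on $H^{1,1}$), and then the perturbed class $A+\epsilon D$ is still ample, hence lies in $\BKX$ and pairs nonnegatively with the effective divisor $D$, giving $0\leq q(A+\epsilon D,D)=\epsilon q(D)<0$. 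Your proof is more self-contained modulo quoting Fujiki, yields the slightly stronger conclusion $q(A,D)>0$ directly, and makes transparent where ampleness (as opposed to nef and big) enters — a point worth making, since the paper's Theorem \ref{theorem-technical} exhibits nef and big $A$ with $q(A,B_j)=0$ for nonzero prime divisors $B_j$. The paper's proof has the advantage of using only tools already recalled in its Section 2 (the index theorem and the description of $\BKX$ via pairing with effective divisors).
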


\begin{proof}
By the index theorem we have $q(D) \leq 0$ 
and $q(D)=0$ if and only if $D=0$. Arguing by contradiction we assume $D \neq 0$ and therefore $q(D)<0$. Since $A$ is ample the class $A+\epsilon D$ is ample for
$1 \gg \epsilon>0$, so
$$
0 \leq q(A+\epsilon D, D) = q(A, D) + \epsilon q(D) = \epsilon q(D)<0,
$$
the desired contradiction.
\end{proof}

We denote by $\CX \subset H^{1,1}(X, \R)$ the positive cone, i.e. the connected component
of the cone $\{ \alpha \in H^{1,1}(X, \R) \ | \ q(\alpha)>0 \}$ that contains the K\"ahler cone of $X$.  We denote by $\BKX$ the birational K\"ahler cone and recall that
\begin{align*}
\BKX &= \{
\alpha \in \CX \ | \ q(\alpha, E) \geq 0 \ \text{ for any } \ E \subset X \mbox{ effective divisor}
\},  \\
&=\{
\alpha \in \CX \ | \ q(\alpha, E) \geq 0 \ \text{ for any } \ E \subset X \mbox{ prime uniruled divisor}
\}. 
\end{align*} 
The intersection of $\BKX$ with the Neron-Severi space $N^1(X)_\R \subset H^{1,1}(X, \R)$ is the cone of movable divisors, i.e. the closure of the cone generated by mobile divisors.

If $D$ is an effective divisor on $X$, then $q(D, E) \geq 0$ for every prime divisor $E$ not contained in the support of $D$ \cite[Prop.4.2(ii)]{Bou04}. Therefore we have
\begin{fact} \label{fact:effectiveBKX}
An effective divisor $D$ is in $\BKX$ if and only if $q(D, E) \geq 0$ for every prime
uniruled divisor $E$ contained in the support of $D$.
\end{fact}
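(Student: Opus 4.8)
The plan is to treat the two implications separately. The implication ``$\Rightarrow$'' is immediate: if $D \in \BKX$, then by the second description of $\BKX$ recalled above one has $q(D, E) \geq 0$ for \emph{every} prime uniruled divisor $E \subset X$, in particular for those contained in $\supp(D)$.

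For ``$\Leftarrow$'', suppose $q(D, E) \geq 0$ for every prime uniruled divisor $E \subseteq \supp(D)$. By \cite[Prop.4.2(ii)]{Bou04} the inequality $q(D, E) \geq 0$ also holds for every prime divisor \emph{not} contained in $\supp(D)$, so that $q(D, E) \geq 0$ for every prime uniruled divisor $E \subset X$; in view of the description of $\BKX$ it then remains to check that $D \in \CX$. For this I would use the divisorial Zariski decomposition $D = P(D) + N(D)$ of Boucksom, in which $P(D)$ is modified nef and $N(D) = \sum_j \nu_j E_j$ is effective with positive coefficients $\nu_j$ and prime components $E_j \subseteq \supp(D)$ whose Gram matrix $\big(q(E_i,E_j)\big)_{i,j}$ is negative definite; in particular $q(E_j) < 0$, so each $E_j$ is uniruled by \cite[Prop.4.7]{Bou04}. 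The decisive extra input is the orthogonality relation $q(P(D), E_j) = 0$ for all $j$. Granting it, assume $N(D) \neq 0$: on the one hand $q(D, N(D)) = \sum_j \nu_j\, q(D, E_j) \geq 0$, since the $E_j$ are prime uniruled divisors contained in $\supp(D)$; on the other hand $q(D, N(D)) = q(P(D), N(D)) + q(N(D), N(D)) = q(N(D), N(D)) < 0$ by negative definiteness --- a contradiction. Hence $N(D) = 0$, so $D = P(D)$ is modified nef; in particular $D \in \CX$, and combined with the inequalities already established the description of $\BKX$ gives $D \in \BKX$.

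The single point that is not purely formal is the orthogonality relation $q(P(D), E_j) = 0$ between the positive part and the components of the negative part of the divisorial Zariski decomposition. On a hyperk\"ahler manifold, where the Beauville--Bogomolov form takes the place of the intersection form, this is a standard feature of Boucksom's construction --- the exact analogue of the identity $P \cdot E_j = 0$ for surface Zariski decompositions --- and may simply be quoted; everything else is a direct consequence of the two forms of the index theorem recorded above.
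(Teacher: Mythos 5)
Your proof is correct, and it is more detailed than the paper's, which treats the Fact as an immediate consequence of the sentence preceding it: by \cite[Prop.4.2(ii)]{Bou04} an effective divisor $D$ automatically satisfies $q(D,E)\geq 0$ for every prime divisor $E$ not contained in $\supp(D)$, so the hypothesis upgrades this to \emph{all} prime uniruled divisors and one concludes by the second displayed description of $\BKX$. What the paper leaves implicit, and what you rightly isolate as the only non-formal point, is the verification that $D$ lies in (the closure of) the positive cone $\CX$, which is part of that description. Your Zariski-decomposition argument for this is sound: the components $E_j$ of $N(D)$ are among the components of $D$ (since $N(D)\leq D$ for an effective representative of the class), they have $q(E_j)<0$ by negative definiteness and are therefore uniruled by \cite[Prop.4.7]{Bou04}, and the orthogonality $q(P(D),E_j)=0$ is indeed part of Boucksom's characterisation of the decomposition on hyperk\"ahler manifolds (\cite[Thm.4.8]{Bou04}, which the paper itself invokes in the proof of Lemma \ref{lemma1}); your sign comparison then forces $N(D)=0$. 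For comparison, there is a slightly more elementary route to the same point which stays closer to the paper's one-liner: writing $D=\sum a_iD_i$, every non-uniruled component satisfies $q(D_i)\geq 0$ by \cite[Prop.4.7]{Bou04}, hence $q(D_i,D)=a_iq(D_i)+q(D_i,D-a_iD_i)\geq 0$ by \cite[Prop.4.2(ii)]{Bou04} applied to the effective divisor $D-a_iD_i$; combined with the hypothesis on the uniruled components this gives $q(D)=\sum a_iq(D,D_i)\geq 0$, and $q(D,\omega)>0$ for a K\"ahler class $\omega$ places $D$ on the correct side. Your approach buys a conceptually clean reduction to the uniqueness of the Zariski decomposition; the elementary one avoids quoting anything beyond the two propositions of Boucksom already used in the paper. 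One cosmetic remark: modified nefness of $D=P(D)$ only gives that $D$ lies in the \emph{closure} of $\CX$ rather than in the open cone, but this is consistent with the paper's use of $\BKX$ as a closed cone containing classes of square zero (e.g.\ the mobile part $M$).
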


Moreover we have
\begin{lemma} \cite[Lemma 2.13]{Rie23} \label{lemma:h-chi}
Let $D$ be a divisor on $X$ such that $D \in \BKX$ and $q(D)>0$. Then
$$
h^0(X, \sO_X(D)) = \chi(X, \sO_X(D))
$$
\end{lemma}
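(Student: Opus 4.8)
The plan is to push $D$ to a birational model of $X$ on which it becomes nef and big, and there invoke Kawamata--Viehweg vanishing. Since $D \in \BKX$ and $q(D) > 0$, Boucksom's description of the birational K\"ahler cone \cite{Bou04}, together with the chamber decomposition of the movable cone into pullbacks of nef cones of birational hyperk\"ahler models, yields a birational map $f : X \dashrightarrow X'$ onto a hyperk\"ahler manifold $X'$ which is an isomorphism in codimension one and such that $D' := f_* D$ is nef. Because $f$ induces an isometry $f_* : H^2(X, \Z) \to H^2(X', \Z)$ of Beauville--Bogomolov lattices with $f_*[D] = [D']$, we have $q(D') = q(D) > 0$, so $D'$ is also big.

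With this model in hand I would argue in three steps. First, $h^0$ is unchanged: if $U \subseteq X$ and $U' = f(U) \subseteq X'$ denote the common open subset on which $f$ restricts to an isomorphism, then $X \setminus U$ and $X' \setminus U'$ have codimension at least two, so by normality restriction gives $H^0(X, \sO_X(D)) = H^0(U, \sO_U(D))$ and $H^0(X', \sO_{X'}(D')) = H^0(U', \sO_{U'}(D'))$, and $f|_U$ identifies the two groups in the middle. Second, $\chi$ is unchanged: writing $P$ for the Huybrechts--Riemann--Roch polynomial, which depends only on the deformation type and hence---since birational hyperk\"ahler manifolds are deformation equivalent---is the same for $X$ and $X'$, we get $\chi(X, \sO_X(D)) = P(q(D)) = P(q(D')) = \chi(X', \sO_{X'}(D'))$. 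Third, on $X'$ we have $K_{X'} \simeq \sO_{X'}$, so $\sO_{X'}(D') = \sO_{X'}(K_{X'} + D')$ with $D'$ nef and big; Kawamata--Viehweg vanishing then gives $H^i(X', \sO_{X'}(D')) = 0$ for $i > 0$, hence $h^0(X', \sO_{X'}(D')) = \chi(X', \sO_{X'}(D'))$. Chaining the three equalities yields $h^0(X, \sO_X(D)) = \chi(X, \sO_X(D))$.

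The step that carries all the weight is the first one: producing a birational model on which $D$ is nef. This is genuinely a hyperk\"ahler input---a class in $\BKX$ need not be nef on $X$ itself, since there may be flopping curves meeting it negatively, and one relies on the structure of the movable cone to flop them away. A technical point here is to check that $D$, which a priori only satisfies the (closed) positivity conditions defining $\BKX$, actually lies in one of the closed chambers $f^* \overline{\mathcal{K}}_{X'}$ and not merely in a limit of such; this is guaranteed by the local finiteness, up to automorphisms, of the chamber decomposition. Once the model is produced, the remaining steps are formal manipulations with vanishing and birational invariance.
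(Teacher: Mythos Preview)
Your argument is correct. The paper does not give its own proof of this lemma; it simply cites \cite[Lemma 2.13]{Rie23}, and the argument there is essentially the one you outline (pass to a birational model where the divisor is nef, apply Kawamata--Viehweg, and transport $h^0$ and $\chi$ back).

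One remark on your first step and the technical concern you raise at the end: rather than appealing to the chamber decomposition and its local finiteness, you can invoke \cite[Thm.~1.2]{MZ13} directly---as the paper itself does later in the proof of Theorem~\ref{thm:HK4}---which shows via a log MMP that any big movable divisor on a projective hyperk\"ahler manifold becomes nef on some birational hyperk\"ahler model. This sidesteps the worry about $D$ lying only in a limit of chambers. Projectivity of $X$ is automatic here, since $D$ is an integral class with $q(D)>0$ (Huybrechts' projectivity criterion).
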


By \cite[Section 1.11]{Huy99} there exists a polynomial 
$P \in \Q[X]$ of degree $n$ such that for every $D \in \pic{X}$ one has
\begin{equation}
\label{eqn:RRpolynomial}
\chi(X, \sO_X(D)) = P (q(D)).
\end{equation}
Note that the constant term of the polynomial $P$ is $\chi(X, \sO_X)=n+1$ and by a recent result
of Chen Jiang \cite[Thm.1.1]{Jia23} all the other coefficients are strictly positive.
In particular 
$$
P : \Q \rightarrow \Q
$$
is a strictly increasing function. As an immediate consequence one has

\begin{corollary} \label{cor:Jiang}
Let $X$ be a hyperk\"ahler manifold of dimension $n$, and let $D \in \pic{X}$ be a divisor such that $D \in \BKX$ and $q(D)>0$. Then we have
$$
h^0(X, \sO_X(D)) = P(q(D)) > n+1.
$$
Let $D_1, D_2 \in \pic{X}$ be divisors such that $D_i \in \BKX$ and $q(D_i)>0$ for $i=1,2$. 
Then $h^0(X, \sO_X(D_1))=h^0(X, \sO_X(D_2))$ if and only if $q(D_1)=q(D_2)$.
\end{corollary}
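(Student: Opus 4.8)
The plan is to deduce both assertions directly from the material assembled just above, so the argument is essentially bookkeeping. First I would invoke Lemma \ref{lemma:h-chi}: since $D \in \BKX$ and $q(D) > 0$, the higher cohomology vanishes and $h^0(X, \sO_X(D)) = \chi(X, \sO_X(D))$. Combining this with the Riemann--Roch polynomial identity \eqref{eqn:RRpolynomial} gives $h^0(X, \sO_X(D)) = P(q(D))$. To get the strict inequality I would use Jiang's theorem \cite{Jia23}: the constant term of $P$ equals $\chi(X, \sO_X) = n+1$ and every higher coefficient is strictly positive, so $P(t) > P(0) = n+1$ for every real $t > 0$; applying this with $t = q(D) > 0$ yields $h^0(X, \sO_X(D)) = P(q(D)) > n+1$.

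For the second assertion I would apply the first part to each $D_i$, obtaining $h^0(X, \sO_X(D_i)) = P(q(D_i))$ for $i=1,2$. Since all coefficients of $P$ beyond the constant term are positive, $P$ is strictly increasing on $[0,\infty)$, hence injective there; therefore $h^0(X, \sO_X(D_1)) = h^0(X, \sO_X(D_2))$ forces $q(D_1) = q(D_2)$, while the converse is trivial because $h^0$ is recovered from $q(D)$ through $P$. I do not expect a genuine obstacle here: the entire content sits in the three inputs --- the vanishing in Lemma \ref{lemma:h-chi}, the Huybrechts--Riemann--Roch polynomial, and the positivity of its coefficients due to Jiang --- and once these are granted the corollary is immediate. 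The only point worth a line of care is that Jiang's positivity must be used to obtain strict monotonicity of $P$ on the nonnegative reals, which is precisely what both the bound $>n+1$ and the injectivity statement require.
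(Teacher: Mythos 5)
Your proposal is correct and follows exactly the paper's argument: Lemma \ref{lemma:h-chi} plus the Riemann--Roch polynomial \eqref{eqn:RRpolynomial} give $h^0(X,\sO_X(D))=P(q(D))$, and Jiang's positivity of the coefficients gives strict monotonicity of $P$ on the nonnegative reals, yielding both the bound $>n+1$ and the injectivity needed for the second assertion. No gaps.
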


\begin{proof}
By Lemma \ref{lemma:h-chi} we have $h^0(X, \sO_X(D)) = \chi(X, \sO_X(D))$ and by \eqref{eqn:RRpolynomial} the latter is computed by the Riemann-Roch polynomial. Since $P$ is strictly increasing one
has $P(q(D))>P(0)=n+1$. The proof of the second statement is analogous.
\end{proof}

The following consequence of Markman's work is well-known, e.g. \cite[Lemma 3.5]{Rie23}:

\begin{lemma} \label{lemma:divisibility}
Let $E \subset X$ be a prime divisor such that $q(E)<0$ and let $D$ be any divisor on $X$. Then
$$
-q(E) \ | \ 2 q(E,D).
$$
Thus if $q(E,D) \geq 0$ we can write
$$
q(E,D) = m \frac{-q(E)}{2}
$$
with $m \in \N$.
\end{lemma}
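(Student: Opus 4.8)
The plan is to use Markman's theory of the monodromy action on $H^2(X,\Z)$, which shows that the class of a prime divisor $E$ with $q(E) < 0$ behaves like a reflection vector. Concretely, the key input is that for such an $E$ the reflection $\rho_E(\alpha) = \alpha - \frac{2q(E,\alpha)}{q(E)} E$ preserves the lattice $H^2(X,\Z)$, or equivalently that $\frac{2q(E,\alpha)}{q(E)} \in \Z$ for every $\alpha \in H^2(X,\Z)$. This is the standard consequence of Markman's results (see e.g.\ \cite[Lemma 3.5]{Rie23}), and I would cite it directly rather than reprove it. Applying this with $\alpha = c_1(D)$ gives $\frac{2q(E,D)}{q(E)} \in \Z$, which is exactly the divisibility statement $-q(E) \mid 2q(E,D)$ once we absorb the sign.

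First I would recall the precise form of Markman's statement: the divisor class of any prime exceptional divisor (in particular any prime divisor of negative square, since $q(E) < 0$ forces $E$ to be rigid and in fact exceptional) is a \emph{monodromy reflective} class, meaning the associated Picard--Lefschetz reflection is induced by a monodromy operator and hence is integral on $H^2(X,\Z)$. Second, unwinding integrality of the reflection $\rho_E$ on the integral lattice yields that $q(E)$ divides $2q(E,\beta)$ for all integral $\beta$; plugging in $\beta = c_1(D)$ gives the first displayed formula. Third, for the second assertion, if $q(E,D) \geq 0$ then since $-q(E) > 0$ we may set $m := \frac{2q(E,D)}{-q(E)}$, which is a nonnegative rational number, and by the divisibility just established it is an integer, so $m \in \N$ and $q(E,D) = m\,\frac{-q(E)}{2}$ as claimed.

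The only genuine subtlety — and the step I would flag as the main obstacle if one wanted a self-contained argument — is justifying that a prime divisor with $q(E) < 0$ is indeed monodromy reflective, i.e.\ that its reflection lies in the monodromy group. This uses Markman's classification together with the fact (Druel, Boucksom) that negative square prime divisors are uniruled and exceptional; in the context of this paper it is legitimate to invoke it as a black box, exactly as \cite[Lemma 3.5]{Rie23} does. Everything after that is elementary lattice arithmetic: integrality of the reflection coefficient, and the observation that a nonnegative rational number which is an integer multiple of $\tfrac12$ with integral numerator over $-q(E)$ lies in $\N$.
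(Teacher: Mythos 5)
Your proposal is correct and follows essentially the same route as the paper: both invoke Markman's integrality result (the paper via \cite[Cor.3.6]{Mar13}, stating that $E^\vee = \frac{-2q(E,\bullet)}{q(E)}$ is an integral class in $H_2(X,\Z)$, which is exactly the integrality of your reflection coefficient) and then conclude by the same elementary arithmetic. The only nitpick is that ``the reflection preserves the lattice'' is strictly weaker than ``$\frac{2q(E,\alpha)}{q(E)}\in\Z$'' when $E$ is a non-primitive class, so one should cite Markman's statement in the form of the integrality of $E^\vee$ (as the paper does) rather than via lattice-preservation of $\rho_E$; also note that in \cite{Mar13} a prime divisor with $q(E)<0$ is a prime exceptional divisor by definition, so the step you flag as the main obstacle requires no further justification.
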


\begin{proof}
By \cite[Cor.3.6]{Mar13} the class $E^\vee := \frac{-2 q(E, \bullet)}{q(E)} \in H_2(X, \Q)$ 
is integral, i.e. a class in $H_2(X, \Z)$. Thus for every $D \in \pic{X} \subset H^2(X, \Z)$ we have
$$
\frac{-2 q(E, D)}{q(E)} = E^\vee \cdot D \in \Z.
$$
\end{proof}

\section{Base divisors on hyperk\"ahler manifolds}

We fix the setup for the whole section:

\begin{setup} \label{setup}
Let $X$ be a hyperk\"ahler manifold of dimension $2n$, and let $A \in \BKX$ be a divisor
such that $q(A)>0$ and $|A|$ has a fixed divisor.
We denote by
$$
A \simeq M+B = M + \sum_{j=1}^{k+1} b_j B_j
$$
the decomposition into mobile and fixed part where the $B_j$ are distinct prime divisors
and $b_j \in \N^*$. Since $h^0(X, \sO_X(A))>n+1$ (Cor. \ref{cor:Jiang}) it is clear that the mobile part $M$ is not zero.
Moreover $|M|$ has no fixed component, so $M \in \BKX$.
Note that for any effective divisor $B' \subset B$ we have a canonical isomorphism
$H^0(X, \sO_X(B')) \simeq H^0(X, \sO_X(B))$. In particular
\begin{equation}
\label{h0easy}
h^0(X, \sO_X(B'))=1, \qquad h^0(X, \sO_X(A))=h^0(X, \sO_X(M+B')).
\end{equation}
\end{setup}

We start with some observations that are well-known to specialists:

\begin{lemma} \label{lemma1}
In the Setup \ref{setup}, let $B' \subset B$ be an effective divisor such that $M+B' \in \BKX$ and $q(M+B')>0$. Then we have $B'=B$.
In particular we have $q(M)=0$.
\end{lemma}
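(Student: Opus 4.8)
The overall plan follows the K3 strategy recalled in the introduction: convert the equality of dimensions of linear systems coming from \eqref{h0easy} into an equality of Beauville--Bogomolov squares via Corollary \ref{cor:Jiang}, and then read off $B'=B$ from positivity and the index theorem. Concretely, I set $D := B-B'$, an effective divisor with $A \simeq (M+B')+D$. By \eqref{h0easy} we have $h^0(X,\sO_X(A)) = h^0(X,\sO_X(M+B'))$. Since $A \in \BKX$ with $q(A)>0$ (Setup \ref{setup}) and $M+B' \in \BKX$ with $q(M+B')>0$ (hypothesis of the Lemma), Corollary \ref{cor:Jiang} applies to both divisors, and the equality of spaces of sections forces $q(A) = q(M+B')$.

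The second step is a short computation. Expanding $q(A) = q\big((M+B')+D\big)$ and using $q(A)=q(M+B')$ gives $q(D) = -2\,q(M+B',D)$, whence also $q(A,D) = q(M+B',D) + q(D) = \tfrac12\, q(D)$. Because $M+B' \in \BKX$ and $D$ is effective we get $q(M+B',D) \ge 0$, so $q(D) \le 0$; because $A \in \BKX$ and $D$ is effective we get $q(A,D) \ge 0$. Combined with $q(A,D) = \tfrac12 q(D)$ this forces $q(A,D) = q(D) = 0$. Since $q$ has signature $(1, b_2(X)-3)$ on $H^{1,1}(X,\R)$ and $q(A)>0$, the restriction of $q$ to $A^{\perp} \subset H^{1,1}(X,\R)$ is negative definite; as $D \in A^{\perp}$ and $q(D)=0$ we conclude $D=0$, i.e. $B'=B$. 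This last step is exactly the argument of Lemma \ref{lemma:HI2}.

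For the final assertion $q(M)=0$: since $M \in \BKX$ (Setup \ref{setup}) we have $q(M) \ge 0$, and if $q(M)>0$ we may apply the part just proved with $B'=0 \subset B$ (legitimate, as $M \in \BKX$ and $q(M)>0$) to deduce $B=0$, contradicting the standing assumption that $|A|$ has a fixed divisor. Hence $q(M)=0$.

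As for the difficulty: there is essentially no hard step once Corollary \ref{cor:Jiang} — that is, Jiang's theorem together with the vanishing Lemma \ref{lemma:h-chi} — is available. The only point requiring care is to verify that all three divisors $A$, $M+B'$ and $M$ genuinely lie in $\BKX$ and carry squares of the right sign so that the corollary applies; this is precisely why the Lemma is phrased with the hypotheses $M+B' \in \BKX$, $q(M+B')>0$ rather than for an arbitrary effective subdivisor $B' \subset B$. Once this is in place, the conclusion is forced purely by the positivity $q(\alpha, E) \ge 0$ on $\BKX$ for effective $E$ and the index theorem.
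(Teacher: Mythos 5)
Your proof is correct, and up to the final step it coincides with the paper's argument: both use \eqref{h0easy}, Lemma \ref{lemma:h-chi} and Corollary \ref{cor:Jiang} to force $q(A)=q(M+B')$, and then exploit the nonnegativity of the pairing of the effective divisor $D=B-B'$ against the two classes $A$ and $M+B'$ in $\BKX$ to extract the equalities $q(A,D)=q(M+B',D)=0$. Where you diverge is the conclusion: the paper deduces from $q(A,D)=0$ and the index theorem that the intersection matrix of $D$ is negative, identifies $A\simeq (M+B')+D$ with the divisorial Zariski decomposition of $A$ via \cite[Thm.4.8]{Bou04}, and concludes $D=0$ from uniqueness of that decomposition; you instead observe that the two vanishings give $q(D)=q(A,D)-q(M+B',D)=0$, so that $D$ is an isotropic class in $A^{\perp}$, which is negative definite by the index theorem since $q(A)>0$, whence $D=0$ as a class and therefore as an effective divisor. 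Your route is slightly more elementary in that it bypasses Boucksom's theorem entirely, at the cost of the (harmless) observation that a nonzero effective divisor has nonzero cohomology class; note only that the perturbation argument of Lemma \ref{lemma:HI2} as written uses ampleness of $A$, so it is really the index theorem itself, not that lemma, that you are invoking. The deduction of $q(M)=0$ by taking $B'=0$ is exactly the paper's.
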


\begin{proof} 
The second statement follows from the first by setting $B'=0$.

For the first statement note that 
$$
\chi(X, \sO_X(A)) = h^0(X, \sO_X(A))=h^0(X, \sO_X(M+B')) = \chi(X, \sO_X(M+B'))
$$
by \eqref{h0easy} and Lemma \ref{lemma:h-chi}. By Corollary \ref{cor:Jiang} this implies
$q(A)=q(M+B')$. 
Since $A \in \BKX$ and $M+B' \in \BKX$ we have 
\begin{multline*}
q(A)  = q(A, M+B') + q(A, B-B') \geq q(A, M+B') =
\\
= q(M+B') + q(B-B', M+B') \geq q(M+B') = q(A).
\end{multline*}
Thus all the inequalities are equalities, i.e. we have
$$
q(A, B-B') = 0, \qquad q(M+B', B-B')=0.
$$
Since $q(A)>0$ the first equality and the index theorem imply that the intersection matrix of
the effective divisor $B-B'$ is negative (or $B-B'=0$). Since $M+B' \in \BKX$ the second equality implies by \cite[Thm.4.8]{Bou04} that
$$
A \simeq (M+B') + (B-B')
$$
is the divisorial Zariski decomposition of $A$. Yet $A \in \BKX$, so the negative part $B-B'$
is zero by uniqueness of the decomposition.
\end{proof}

The intersection of the Picard lattice $\pic{X} \subset H^{1,1}(X, \R)$ with the vector space $\R M$ is a discrete subgroup of $(\R,+)$ so we have
$$
\R  M \cap \pic{X} = \Z D
$$
for some $D \in \pic{X}$. We say that $M$ is a primitive class if $D = \pm M$.

\begin{corollary} \label{cor:notprimitive}
In the Setup \ref{setup}, assume that 
$M$ is not a primitive class
in the Picard lattice $\pic{X}$. Then $B$ is a prime divisor. 
\end{corollary}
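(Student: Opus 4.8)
The plan is to exploit the divisibility statement of Lemma \ref{lemma:divisibility} together with Jiang's strict monotonicity (Corollary \ref{cor:Jiang}) to show that if $M$ is not primitive, then the fixed part $B$ can have only one prime component. First I would write $\R M \cap \pic{X} = \Z D$ with $M \simeq \ell D$ for some integer $\ell \geq 2$, since $M$ is not primitive. Note that $D \in \BKX$ as well (it is a positive multiple of a class in the cone, or one checks $q(D,E)\geq 0$ for each uniruled prime $E$ directly from $q(M,E)\geq 0$), and $q(D)=q(M)/\ell^2=0$ by Lemma \ref{lemma1}. The key numerical observation is then the following: for each prime component $B_j$ of $B$, the class $M + b_j B_j$ — or more to the point the full mobile-plus-fixed class associated to dropping all but one component — should sit in $\BKX$ with positive square, and I want to compare such classes for different $j$.

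The central step is to show that for each $j$ the divisor $M + b_j B_j$ lies in $\BKX$ and has strictly positive BB-square, so that I may invoke Lemma \ref{lemma1} with $B' = \sum_{i\neq j} b_i B_i$ to conclude $B' = B$, i.e. that there is only one component. To see $M + b_j B_j \in \BKX$: by Fact \ref{fact:effectiveBKX} it suffices to check $q(M + b_j B_j, E)\geq 0$ for the uniruled components $E$ in its support, namely $E = B_j$ (if it is uniruled) — and since $A \simeq (M+b_jB_j) + \sum_{i\neq j} b_i B_i \in \BKX$, we get $q(A, B_j)\geq 0$; writing $q(A,B_j) = q(M+b_jB_j, B_j) + \sum_{i\neq j} b_i q(B_i, B_j)$ with each $q(B_i,B_j)\geq 0$ for $i \neq j$ (distinct primes), one does not immediately get the sign — so instead I would argue with $D$: since $q(A)>0$ and $A \in \BKX$, the index theorem gives that the intersection matrix of $B$ is negative definite, hence $q(B_j) < 0$ for every $j$, and then Lemma \ref{lemma:divisibility} applies to each $B_j$. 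Now $q(M, B_j) = \ell\, q(D, B_j)$, and by Lemma \ref{lemma:HI1} (with $\alpha_1 = A$, $q(A)>0$, and $\alpha_2 = D$, $q(D)=0$, both nonzero) we get $q(A, D) \neq 0$, hence $q(A,D) > 0$ since $A \in \BKX$; combined with the divisibility $-q(B_j) \mid 2q(B_j, D)$ this pins down the intersection numbers enough to force $q(M+b_jB_j) > 0$.

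The main obstacle I anticipate is precisely controlling the sign and size of $q(M + b_j B_j)$: a priori this is $q(M) + 2b_j q(M, B_j) + b_j^2 q(B_j) = 2b_j q(M,B_j) + b_j^2 q(B_j)$, a difference of a positive and a negative term, and one needs the non-primitivity of $M$ (the factor $\ell \geq 2$) to make the positive term $2b_j \ell\, q(D,B_j)$ dominate. The cleanest route is: since $D \in \BKX$ with $q(D)=0$ and $q(D,B_j) > 0$ (using $q(D,B_j)\neq 0$ from Lemma \ref{lemma:HI1} or directly from Lemma \ref{lemma:HI2}-type reasoning, and $\geq 0$ from $D \in \BKX$), write $q(D,B_j) = m_j\,\frac{-q(B_j)}{2}$ with $m_j \in \N^*$ by Lemma \ref{lemma:divisibility}; then $q(M + b_j B_j) = -q(B_j)\,b_j(\ell m_j - b_j)$. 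If $\ell m_j \geq b_j + 1$ this is positive and we are done by Lemma \ref{lemma1}; the remaining cases $\ell m_j \leq b_j$ must be excluded, presumably by a further comparison of $h^0$'s via Corollary \ref{cor:Jiang} — e.g. $q(A) = q(A, A) = q(A, M + B)$ expanded gives $q(A) = \ell m_A \frac{-q(\cdot)}{2} + \sum b_j q(A,B_j)$, constraining things — or by noting that $M + (b_j{-}1)B_j$, $M+(b_j{-}2)B_j,\dots$ give a chain of subsystems of $|A|$ whose $h^0$ must all equal $h^0(M)$ by \eqref{h0easy}, forcing $q$ to be constant along the chain, which via Corollary \ref{cor:Jiang} contradicts the computed values unless $B$ has a single component.
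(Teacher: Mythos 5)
Your overall strategy is the right one (combine Lemma \ref{lemma:divisibility}, the factor $\ell\geq 2$ coming from non-primitivity, and Lemma \ref{lemma1}), but there is a genuine gap, and you have located it yourself without closing it: testing the divisor $M+b_jB_j$ gives $q(M+b_jB_j)=-q(B_j)\,b_j(\ell m_j-b_j)$, which is only positive when $\ell m_j>b_j$, and nothing rules out a large multiplicity $b_j$. The sketched escape routes (a ``further comparison of $h^0$'s'', or a chain $M+(b_j-1)B_j,\dots$) do not work as stated: Lemma \ref{lemma:h-chi} and Corollary \ref{cor:Jiang} only apply to classes that are already known to lie in $\BKX$ with positive square, which is exactly what is in doubt for these intermediate divisors. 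The paper's proof avoids the problem entirely by testing the \emph{reduced} divisor $B'=B_1$ (multiplicity one) for a single component with $q(M,B_1)>0$: then $q(M+B_1,B_1)=\bigl(\tfrac{\ell m_1}{2}-1\bigr)(-q(B_1))\geq 0$ and $q(M+B_1)=(\ell m_1-1)(-q(B_1))>0$ hold unconditionally from $\ell\geq 2$, $m_1\geq 1$, so Lemma \ref{lemma1} applies and yields $B=B_1$ at once. Note that this choice also settles a point your argument would miss even if the positivity were fixed: concluding $B=b_jB_j$ shows $B$ is irreducible but not that it is reduced, whereas $B=B_1$ gives primality directly.

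Two further inaccuracies. First, the claim that the index theorem forces the intersection matrix of $B$ to be negative definite (hence $q(B_j)<0$ for all $j$) does not follow: the signature statement only controls $q$ on the orthogonal complement of a positive class, and the $B_j$ need not lie there. This is harmless because the case $q(B_j)\geq 0$ is the easy one ($B_j\in\BKX$ immediately, so $M+B_j\in\BKX$ and $q(M+B_j)>0$ whenever $q(M,B_j)>0$), but it must be treated as a separate case, as the paper does. Second, invoking Lemma \ref{lemma1} with $B'=\sum_{i\neq j}b_iB_i$ is the wrong substitution: the lemma concludes $B'=B$, so you must feed it the component you want to \emph{keep}, not the ones you want to discard. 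Finally, one cannot run the argument ``for each $j$'': $q(M,B_j)>0$ is only guaranteed for at least one $j$ (since $q(A,M)=q(B,M)>0$), which is all that is needed.
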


\begin{proof} Recall that $q(M)=0$ by Lemma \ref{lemma1}.
Since $M$ is not a primitive class we have $M \simeq d L$ with $d \geq 2$ and $L$ a divisor on $X$.
Recall that $A$ and $M$ are in $\BKX$. Since $q(A)>0$ and $M$ is not zero we have $q(A,M)>0$
by Lemma \ref{lemma:HI1}. Therefore
$$
0< q(A, M) = \sum b_j q(B_j, M)
$$
and up to renumbering we can assume $q(M, B_1)>0$. If $q(B_1) \geq 0$ have $B_1 \in \BKX$
and therefore $M+B_1 \in \BKX$. Since 
$$
q(M+B_1) = 2 q(M, B_1) + q(B_1) >0
$$
we conclude with Lemma \ref{lemma1} applied with $B'=B_1$.

Assume now that $q(B_1)<0$. Since 
$$
q(L, B_1) = \frac{1}{d} q(M, B_1)>0
$$ 
we know by Lemma \ref{lemma:divisibility}
that $q(L, B_1) = m \frac{-q(B_1)}{2}$ with $m \in \N^*$. Thus we have
$$
q(M+B_1, B_1)
= d q(L, B_1) + q(B_1) 
= (\frac{md}{2}-1) (-q(B_1)).
$$
Since $d \geq 2$ this shows that $q(M+B_1, B_1) \geq 0$.
Thus $M+B_1 \in \BKX$ and 
$$
q(M+B_1) = 2 q(M, B_1) + q(B_1) = (md-1) (-q(B_1)) > 0
$$
where we used again $d \geq 2$. Now conclude again with Lemma \ref{lemma1}.
\end{proof}

\begin{lemma} \label{lemma2}
In the Setup \ref{setup}, assume that 
$M$ is a primitive class
in the Picard lattice $\pic{X}$. 
Let $0 \neq B' \subset B$ be an effective divisor such that $B' \in \BKX$ and $q(B')=0$. Then we have $B'=B$.
\end{lemma}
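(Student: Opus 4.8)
The plan is to reduce the statement to Lemma \ref{lemma1}, which already gives $B'=B$ as soon as $M+B'\in\BKX$ and $q(M+B')>0$. So I only need to verify these two conditions for the given $B'$, and the primitivity of $M$ will enter precisely to rule out the one configuration in which this verification fails.

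First I would pin down the sign of $q(M,B')$: since $M\in\BKX$ and $B'$ is an effective divisor, the description of the birational K\"ahler cone recalled above forces $q(M,B')\geq 0$. Suppose first $q(M,B')>0$. Because $\CX$ is one nappe of the cone attached to a form of signature $(1,b_2(X)-3)$, hence convex, and $\BKX$ is carved out of $\CX$ by the closed half-spaces $\{q(\cdot,E)\geq 0\}$, the cone $\BKX$ is convex; as $M,B'\in\BKX$ this gives $M+B'\in\BKX$. Moreover $q(M+B')=q(M)+2q(M,B')+q(B')=2q(M,B')>0$, using $q(M)=0$ from Lemma \ref{lemma1} and $q(B')=0$ by hypothesis. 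Lemma \ref{lemma1} applied to this $B'$ then yields $B'=B$, as desired.

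It remains to exclude the degenerate case $q(M,B')=0$. Then $q(M)=q(B')=q(M,B')=0$, so Lemma \ref{lemma:HI0} shows that $M$ and $B'$ are colinear in $H^{1,1}(X,\R)$. Since $M$ is primitive in the Picard lattice and $B'\in\pic{X}$, this upgrades to $B'\simeq\lambda M$ for some integer $\lambda$; as $B'$ is effective and nonzero and $M$ is effective (it is the mobile part), while an effective nonzero divisor is never linearly equivalent to an anti-effective one, $\lambda$ must be $\geq 1$. Hence $(\lambda-1)M$ is effective, so multiplication by its defining section embeds $H^0(X,\sO_X(M))$ into $H^0(X,\sO_X(\lambda M))=H^0(X,\sO_X(B'))$. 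But $h^0(X,\sO_X(B'))=1$ by \eqref{h0easy}, whereas $h^0(X,\sO_X(M))=h^0(X,\sO_X(A))>n+1\geq 2$ by \eqref{h0easy} and Corollary \ref{cor:Jiang} --- a contradiction. So $q(M,B')>0$ necessarily holds, and the previous step concludes the proof.

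The only genuinely delicate point is this degenerate case: one must make sure that colinearity together with primitivity really forces $B'\simeq\lambda M$ in $\pic{X}$ with $\lambda\geq 1$, and that the dimension count is available --- which it is, since $B'$, being a subdivisor of the fixed part $B$, satisfies $h^0(X,\sO_X(B'))=1$. Everything else is bilinearity of $q$, convexity of $\BKX$, and a single invocation of Lemma \ref{lemma1}.
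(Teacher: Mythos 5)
Your argument is correct and is essentially the paper's own proof: both reduce to Lemma \ref{lemma1} when $q(M+B')=2q(M,B')>0$, and in the degenerate case $q(M,B')=0$ both invoke Lemma \ref{lemma:HI0}, use primitivity of $M$ to get $B'\simeq\lambda M$ with $\lambda$ a positive integer, and derive a contradiction from $h^0(X,\sO_X(B'))=1$ versus $h^0(X,\sO_X(M))=h^0(X,\sO_X(A))>1$. The only cosmetic difference is that you split on the sign of $q(M,B')$ rather than of $q(M+B')$, which is the same dichotomy.
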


\begin{proof}
We have $M+B' \in \BKX$, so the statement follows from
Lemma \ref{lemma1} if $q(M+B')>0$. 

Assume from now on that $q(M+B')=0$ and therefore $q(M, B')=0$.
By Lemma \ref{lemma:HI0} this implies $B' = \lambda M$ with $\lambda \in \Q^*$.
Since $\BKX$ contains no lines we have $\lambda>0$ and $M$ primitive implies that
$\lambda \in \N$. 
On a hyperk\"ahler manifold numerical equivalence coincides with linear equivalence, so we have $B' \simeq \lambda M$.
Yet then
$$
h^0(X, \sO_X(B')) \geq h^0(X, \sO_X(M)) = h^0(X, \sO_X(A))>1,
$$
a contradiction to \eqref{h0easy}.
\end{proof}

\begin{lemma} \label{lemma:technical}
In the Setup \ref{setup}, assume that there exists two prime divisors
$B_j, B_{j+1} \subset B$ such that
$$
q(B_j)<0, \qquad q(B_{j+1})<0, \qquad q(B_j, B_{j+1})=\frac{-q(B_j)}{2}.
$$
Then $q(B_j)=q(B_{j+1})$ or $-q(B_{j+1}) \leq \frac{-q(B_j)}{2}$.
\end{lemma}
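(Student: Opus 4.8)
The plan is to obtain the dichotomy directly from Markman's divisibility property (Lemma~\ref{lemma:divisibility}), applied to $B_{j+1}$ rather than to $B_j$. Set $a := -q(B_j)$ and $c := -q(B_{j+1})$. Since divisors are identified with their integral Chern classes, $q(B_j)$ and $q(B_{j+1})$ lie in $\Z$, and as both squares are assumed negative we have $a, c \in \N^*$.

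First I would rewrite the hypothesis $q(B_j, B_{j+1}) = \frac{-q(B_j)}{2}$ as $2\,q(B_{j+1}, B_j) = a$, an integer. Then apply Lemma~\ref{lemma:divisibility} with $E = B_{j+1}$ (legitimate since $q(B_{j+1}) < 0$) and $D = B_j$: this yields $-q(B_{j+1}) \mid 2\,q(B_{j+1}, B_j)$, that is, $c \mid a$. Note that applying the same lemma with $E = B_j$ and $D = B_{j+1}$ only produces the vacuous statement $a \mid a$; this is precisely why the asymmetric shape of the hypothesis — with $-q(B_j)$, and not $-q(B_{j+1})$, on the right-hand side — is what makes the argument work.

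It then remains to argue arithmetically. Since $c \mid a$ with $a, c \in \N^*$, write $a = mc$ with $m \in \N^*$. If $m = 1$, then $a = c$, i.e.\ $q(B_j) = q(B_{j+1})$; if $m \geq 2$, then $a \geq 2c$, i.e.\ $-q(B_{j+1}) = c \leq \frac{a}{2} = \frac{-q(B_j)}{2}$. In either case the conclusion of the lemma holds.

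There is no genuine obstacle in this argument: the only point requiring care is to feed the divisibility lemma the correct prime divisor, namely $B_{j+1}$, since the alternative choice $E = B_j$ produces nothing. Beyond that, one only needs to record that the Beauville--Bogomolov squares of the $B_j$ are integers, which is immediate from identifying a divisor with its integral Chern class.
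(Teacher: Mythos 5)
Your proof is correct and follows essentially the same route as the paper: both apply Lemma~\ref{lemma:divisibility} with $E=B_{j+1}$ and $D=B_j$ to conclude that $-q(B_{j+1})$ divides $-q(B_j)$, and then finish with the same elementary dichotomy on the quotient.
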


\begin{proof}
Applying Lemma \ref{lemma:divisibility} to $E=B_{j+1}$ we can write
$$
\frac{-q(B_j)}{2} =  q(B_j, B_{j+1}) = m \frac{-q(B_{j+1})}{2}
$$
with $m \in \N^*$. In particular $-q(B_{j+1})$ divides $-q(B_j)$. Thus
if $-q(B_j) \neq -q(B_{j+1})$ we have  $-q(B_{j+1}) \leq \frac{-q(B_j)}{2}$.
\end{proof}

\begin{lemma} \label{lemma:key}
In the Setup \ref{setup}, let $B_j \subset B$ be a prime divisor such that
$q(M, B_j)>0$. Then one of the following holds
\begin{itemize}
\item $M+B_j \in \BKX$
\item $M+B_j \not\in \BKX$ and $2M+B_j \in \BKX$.  
\end{itemize}
In the second case we have 
$$
q(M,B_j) = \frac{-q(B_j)}{2}, \qquad 
q(2M+B_j) = - q(B_j) >0, \qquad
\mbox{and} \qquad 
q(A)< - q(B_j).
$$
\end{lemma}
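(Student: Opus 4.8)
The plan is to argue by cases on the sign of $q(B_j)$, subdividing the negative case according to the integer $m$ furnished by Lemma~\ref{lemma:divisibility}. Since $q(M)=0$ by Lemma~\ref{lemma1}, for $c\in\Q_{\geq 0}$ one has
$$
q(cM+B_j)=2c\,q(M,B_j)+q(B_j),\qquad q(cM+B_j,B_j)=c\,q(M,B_j)+q(B_j),
$$
while for every prime divisor $E\neq B_j$ one has $q(cM+B_j,E)=c\,q(M,E)+q(B_j,E)\geq 0$, since $M\in\BKX$ forces $q(M,E)\geq 0$ and distinct prime divisors meet non-negatively. We will only use $c\in\{1,2\}$, where $cM+B_j$ is effective; then to conclude $cM+B_j\in\BKX$ it suffices to check $q(cM+B_j)>0$ and $q(cM+B_j,B_j)\geq 0$, because an effective class of positive BB-square lies in the positive cone $\CX$ and the displayed pairings then show it meets every effective divisor non-negatively.

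If $q(B_j)\geq 0$, then $q(M+B_j)=2q(M,B_j)+q(B_j)>0$ and $q(M+B_j,B_j)=q(M,B_j)+q(B_j)>0$, hence $M+B_j\in\BKX$ and we are in the first alternative. If $q(B_j)<0$, Lemma~\ref{lemma:divisibility} applied to $E=B_j$ and $D=M$ (using $q(M,B_j)>0$) gives $q(M,B_j)=m\,\frac{-q(B_j)}{2}$ with $m\in\N^*$. When $m\geq 2$ one computes $q(M+B_j)=(m-1)(-q(B_j))>0$ and $q(M+B_j,B_j)=\frac{m-2}{2}(-q(B_j))\geq 0$, so again $M+B_j\in\BKX$.

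It remains to treat $q(B_j)<0$ with $m=1$, i.e. $q(M,B_j)=\frac{-q(B_j)}{2}$, which is the first claimed equality. Then $q(M+B_j)=2q(M,B_j)+q(B_j)=0$, so $M+B_j\notin\CX$ and a fortiori $M+B_j\notin\BKX$. On the other hand $q(2M+B_j)=4q(M,B_j)+q(B_j)=-q(B_j)>0$ and $q(2M+B_j,B_j)=2q(M,B_j)+q(B_j)=0$, so the criterion above gives $2M+B_j\in\BKX$, and $q(2M+B_j)=-q(B_j)$ is the second claimed equality. Finally, for $q(A)<-q(B_j)$, multiplying by the section cutting out the effective divisor $B_j$ and invoking Lemma~\ref{lemma:strict-increase} together with \eqref{h0easy} yields
$$
h^0(X,\sO_X(2M+B_j))\ \geq\ h^0(X,\sO_X(2M))\ >\ h^0(X,\sO_X(M))\ =\ h^0(X,\sO_X(A)),
$$
the strict inequality being legitimate because $h^0(X,\sO_X(M))=h^0(X,\sO_X(A))\geq 2$ by Corollary~\ref{cor:Jiang}. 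Since $A$ and $2M+B_j$ both lie in $\BKX$ with positive BB-square, Corollary~\ref{cor:Jiang} identifies these dimensions with $P(q(A))$ and $P(q(2M+B_j))=P(-q(B_j))$ respectively, and strict monotonicity of $P$ forces $q(A)<-q(B_j)$.

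The case arithmetic is routine; the step that requires the most care is the reduction used in the first paragraph, that an effective class of positive BB-square lies in $\CX$, so that the two pairing inequalities genuinely upgrade to membership in $\BKX$. This rests on $\{q>0\}$ having exactly two connected components and on the fact, via the Fujiki relation, that an effective divisor pairs strictly positively with a K\"ahler class. The other mild subtlety is ensuring in the last step that Jiang's strict-monotonicity statement applies to both $A$ and $2M+B_j$, i.e. that each is a $\BKX$-class of strictly positive BB-square --- which is precisely what the case analysis has supplied.
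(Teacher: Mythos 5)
Your argument is correct and follows essentially the same route as the paper: reduce to $q(B_j)<0$, apply Lemma \ref{lemma:divisibility} to write $q(M,B_j)=m\,\frac{-q(B_j)}{2}$, observe that only $m=1$ obstructs $M+B_j\in\BKX$ while forcing $q(2M+B_j,B_j)=0$ and $q(2M+B_j)=-q(B_j)$, and finally compare $h^0$ of $A$ and $2M+B_j$ via Lemma \ref{lemma:strict-increase} and the strict monotonicity of the Riemann--Roch polynomial. The only differences are cosmetic: you verify the cone membership criterion more explicitly and phrase the last step directly rather than by contradiction.
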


\begin{proof} If $q(B_j) \geq 0$ it is clear that $M+B_j \in \BKX$, so we can 
assume that $q(B_j)<0$. 
By Lemma \ref{lemma:divisibility} applied with $E=B_j$ we have
$$
q(M, B_j) = m \frac{-q(B_j)}{2}
$$
with $m \in \N^*$. Thus 
$$
q(M+B_j, B_j) = (m-2) \frac{-q(B_j)}{2} < 0
$$
if and only if $m=1$ and in this case $q(2M+B_j,B_j)=0$. Thus 
 $M+B_j \not\in \BKX$ implies $m=1$ and $2M+B_j \in \BKX$.   
 
Suppose now that we are in the second case: since $m=1$ the computation above shows
$q(2M+B_j) = -q(B_j)>0$.
Arguing by contradiction we assume that $q(A) \geq -q(B_j)$, then Corollary \ref{cor:Jiang} yields
$$
h^0(X, \sO_X(A)) \geq h^0(X, \sO_X(2M+B_j)) \geq h^0(X, \sO_X(2M)).
$$
Since $h^0(X, \sO_X(M)) \geq 2$ we have $h^0(X, \sO_X(2M))>h^0(X, \sO_X(M))$ by Lemma \ref{lemma:strict-increase}. Thus
$$
h^0(X, \sO_X(A))  \geq h^0(X, \sO_X(2M)) > h^0(X, \sO_X(M)) = h^0(X, \sO_X(A))
$$
since $M$ is the mobile part of $|A|$, a contradiction.
\end{proof}

\begin{theorem} \label{theorem-technical}
In the situation of Setup \ref{setup} one of the following holds
\begin{itemize}
\item $B=B_1$ is a prime divisor with $q(B_1) \leq 0$; or
\item $B= \sum_{j=1}^{k+1} B_j$ is a reduced divisor and the dual graph of $M+ \sum_{j=1}^{k+1} B_j$ is 
\begin{figure}[!h]
\setlength{\unitlength}{1.2mm}
\centering
\begin{picture}(120,10)
\put(25,1){\circle{2}}
\multiput(35,1)(10,0){4}{\circle*{2}}
\put(75,1){\circle*{2}}
\multiput(26,1)(10,0){2}{\line(1,0){8}}
\multiput(46,1)(3,0){3}{\line(3,0){2}}
\multiput(56,1)(10,0){2}{\line(1,0){8}}
\put(24,3){$M$}
\put(34,3){$B_1$}
\put(44,3){$B_2$}
\put(54,3){$B_{k-1}$}
\put(64,3){$B_k$}
\put(74,3){$B_{k+1}$}
\end{picture}
\end{figure}

We have $q(B_j)=q(B_1)<-1$ for all $j \in \{1, \ldots, k\}$ and $-1 \geq q(B_{k+1}) \geq \frac{-q(B_1)}{2}$. Moreover 
$$
q(M, B_1) = q(B_j, B_{j+1}) = \frac{-q(B_1)}{2}, \quad q(A,B_j) = 0
$$
for all $j \in \{1, \ldots, k\}$.
\end{itemize}
In both cases we have $q(M)=0, q(B) \leq 0$ and $q(M, B)>0$.
\end{theorem}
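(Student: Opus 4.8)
The plan is to check first the three assertions that hold in both cases, then to split the analysis according to whether or not $M$ is a primitive class in $\pic{X}$, with essentially all of the work concentrated in the primitive case when $B$ has several components.

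\emph{The assertions common to both cases.} That $q(M)=0$ is Lemma \ref{lemma1} with $B'=0$. For $q(M,B)>0$: since $h^0(X,\sO_X(M))=h^0(X,\sO_X(A))>n+1$, the class $M$ is effective, so $q(A,M)\geq 0$ because $A\in\BKX$; on the other hand $q(A)>0$, $q(M)=0$ and $M\neq 0$, so $q(A,M)\neq 0$ by Lemma \ref{lemma:HI1}; hence $q(A,M)>0$, and expanding $A\simeq M+B$ with $q(M)=0$ gives $q(M,B)=q(A,M)>0$. The inequality $q(B)\leq 0$ I would read off at the very end from the explicit descriptions: $q(B)=q(B_1)\leq 0$ in the first case, while in the chain case $q(A)=q(M+B)=-q(B_1)+q(B)$ together with $q(A)<-q(B_1)$ (or a direct computation with the stated intersection numbers, which gives $q(B)=q(B_{k+1})$) yields $q(B)<0$.

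\emph{The reduction, and the case $B$ supported on one prime.} The mechanism I would use is: for any prime component $B_j$ with $q(M,B_j)>0$, Lemma \ref{lemma:key} applies, and if its first alternative ($M+B_j\in\BKX$) holds then in fact $q(M+B_j)>0$ — obvious if $q(B_j)\geq 0$, and if $q(B_j)<0$ because $M+B_j\in\BKX$ forces $q(M+B_j,B_j)\geq 0$ by Fact \ref{fact:effectiveBKX}, i.e. $m\geq 2$ in the relation $q(M,B_j)=m\tfrac{-q(B_j)}{2}$ provided by Lemma \ref{lemma:divisibility}, so that $q(M+B_j)=(m-1)(-q(B_j))>0$ — and then Lemma \ref{lemma1} with $B'=B_j$ gives $B_j=B$. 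Hence: if $M$ is not primitive, $B=B_1$ is prime by Corollary \ref{cor:notprimitive}; and if $M$ is primitive but $B$ is supported on a single prime $B_1$, then $q(M,B_1)>0$, and either the first alternative of Lemma \ref{lemma:key} yields $b_1=1$ as above, or the second alternative gives $q(B_1)<0$ and $q(M,B_1)=\tfrac{-q(B_1)}{2}$, whence $q(A,B_1)=q(M,B_1)+b_1q(B_1)=q(B_1)(b_1-\tfrac12)<0$, contradicting $A\in\BKX$. In all of these situations $B=B_1$ is prime, and $q(B_1)\leq 0$: otherwise $B_1\in\BKX$ by Fact \ref{fact:effectiveBKX}, so $h^0(X,\sO_X(B_1))=P(q(B_1))>n+1$ by Corollary \ref{cor:Jiang}, contradicting $h^0(X,\sO_X(B_1))=1$ from \eqref{h0easy}. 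This is the first case of the theorem.

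\emph{The chain, when $M$ is primitive and $B$ has at least two components.} Now the first alternative of Lemma \ref{lemma:key} is impossible (it would force $B_j=B$), so for every $B_j$ with $q(M,B_j)>0$ we get $q(B_j)<0$, $q(M,B_j)=\tfrac{-q(B_j)}{2}$, $2M+B_j\in\BKX$, and $q(A)<-q(B_j)$. From $q(A)\geq q(A,M)=q(M,B)=\sum_j b_j q(M,B_j)\geq\tfrac12\sum_{j:q(M,B_j)>0}(-q(B_j))$ (using $q(A,B)\geq 0$ and $b_j\geq 1$), together with $q(A)<-q(B_j)$ for each such $j$ and the elementary bound $\tfrac{a+b}{2}\geq\min(a,b)$, I would deduce that there is a \emph{unique} such index $j$; relabel it $B_1$, so $q(M,B_j)=0$ for $j\geq 2$, and the same estimate forces $b_1=1$; also $q(B_1)$ is even, hence $\leq -2$, since $q(M,B_1)=\tfrac{-q(B_1)}{2}\in\Z$. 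Then I build the chain inductively. Suppose we have a sub-chain $B_1-\cdots-B_r$ with $b_1=\cdots=b_r=1$, with $q(B_s)=q(B_1)$ and $q(A,B_s)=0$ for $s<r$, with $q(B_s,B_{s+1})=\tfrac{-q(B_1)}{2}$, and with each $B_s$ meeting no component of $B$ other than its chain-neighbours. If $q(B_r)=q(B_1)$, then evaluating $q(A,B_r)\geq 0$ gives $q(A,B_r)=\tfrac{q(B_1)}{2}+\sum b_i q(B_i,B_r)$ with the sum running over components not yet in the chain, which forces a genuinely new component $B_{r+1}$ with $q(B_r,B_{r+1})>0$; Lemma \ref{lemma:divisibility} with $E=B_r$, combined with the upper bound $q(A,B_r)\leq q(A,B)=q(A)-\tfrac{-q(B_1)}{2}<\tfrac{-q(B_1)}{2}$, then pins down $q(B_r,B_{r+1})=\tfrac{-q(B_1)}{2}$, $b_{r+1}=1$, $q(B_r,B_i)=0$ for every other component, and $q(A,B_r)=0$; finally Lemma \ref{lemma:technical} at $j=r$ gives either $q(B_{r+1})=q(B_1)$, so the induction continues, or $-q(B_{r+1})\leq\tfrac{-q(B_1)}{2}$ with $q(B_{r+1})\neq q(B_1)$, so the chain terminates and we set $k=r$. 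Since $B$ has finitely many components and each step consumes a new one, the process terminates in the second way; thus $q(B_1)=\cdots=q(B_k)<-1$ and $-q(B_{k+1})\leq\tfrac{-q(B_1)}{2}$, and $q(B_{k+1})<0$ (so $\leq -1$) again by Corollary \ref{cor:Jiang} and \eqref{h0easy}, using Lemma \ref{lemma2} to exclude $q(B_{k+1})=0$. That $B$ is exactly the chain $\sum_{j=1}^{k+1}B_j$ with all multiplicities one — no components outside the chain, $b_{k+1}=1$ — I would force from the constraint $q(A,B)=q(A)-\tfrac{-q(B_1)}{2}<\tfrac{-q(B_1)}{2}$ together with $q(A,B_j)=0$ for $j\leq k$ and $q(A,\cdot)\geq 0$ on all remaining effective components.

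\emph{The main obstacle.} All the conceptual inputs (Lemmas \ref{lemma:key}, \ref{lemma:technical}, \ref{lemma:divisibility}, \ref{lemma1}, \ref{lemma2}, Corollary \ref{cor:Jiang}, and the inequality $q(A,E)\geq 0$ for effective $E$ coming from $A\in\BKX$) are already in place; the real difficulty lies in the last step — running the induction while keeping precise track of which intersection numbers vanish at each stage, verifying that each new component is genuinely new, and in particular excluding the existence of components outside the chain and multiplicities $>1$. This is the "computational effort" alluded to in the introduction, and I expect it to be delicate bookkeeping rather than conceptually hard.
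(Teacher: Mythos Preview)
Your approach is essentially the paper's: same reduction via Corollary \ref{cor:notprimitive} and Lemmas \ref{lemma1}, \ref{lemma2}, \ref{lemma:key}, same inductive construction of the chain using Lemma \ref{lemma:divisibility} with $E=B_r$ to pin down the next component. Two points need tightening.

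First, to invoke Lemma \ref{lemma:technical} at each step you need $q(B_{r+1})<0$, but you only established this for $B_1$. The paper handles this by ruling out $q(B_j)\geq 0$ for \emph{all} components right at the start (Lemma \ref{lemma2} for $q(B_j)=0$; for $q(B_j)>0$ one has $B_j\in\BKX$ and $h^0(X,\sO_X(B_j))>1$). You should do the same, rather than deferring it to the terminal component.

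Second, and more substantively, your final step --- excluding components outside the chain via the inequality $q(A,B)<\tfrac{-q(B_1)}{2}$ together with $q(A,\cdot)\geq 0$ --- does not work as stated: nothing forces $q(A,E)$ to be strictly positive for a hypothetical extra component $E$, so the inequality alone does not bound the number of summands. The paper circumvents this by running the induction over \emph{all} $k+1$ components from the outset and using ``$l<k+1$'' to force $q(B_l)=q(B_1)$ at each stage, so extras never arise. In your bottom-up framework the clean fix is to apply Lemma \ref{lemma1} once the chain terminates: with $B'=\sum_{j=1}^{k+1}B_j$ one checks directly that $q(M+B',B_s)=0$ for $s\leq k$ and $q(M+B',B_{k+1})=\tfrac{-q(B_1)}{2}+q(B_{k+1})\geq 0$, so $M+B'\in\BKX$, while $q(M+B')=-q(B_1)+q(B_{k+1})>0$; hence $B'=B$. (A minor aside: your argument that $q(B_1)$ is even because $q(M,B_1)\in\Z$ is not justified, since the bilinear form of $q$ need not be integer-valued; the paper instead uses $q(A)\in\N^*$ and $q(A)<-q(B_1)$.)
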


\begin{proof}[Proof of Theorem \ref{theorem-technical}]
We recall the notation
$$
B = \sum_{j=1}^{k+1} b_j B_j
$$
from the Setup \ref{setup}. We assume that $B \neq B_1$, our goal is to obtain the description of $B$ from the second case.

Recall that $M \in \BKX$ and $q(M)=0$ by Lemma \ref{lemma1}. By Corollary \ref{cor:notprimitive} we can assume without loss of generality that $M$ is a primitive class in the Picard lattice.
By Lemma \ref{lemma2} we can assume $q(B_j)<0$ for all $j=1, \ldots, k+1$. 

Since $0< q(A, M) = \sum b_j q(B_j, M)$ there exists a divisor $B_j$ such that $q(M,B_j)>0$.
Among these divisor we choose $B_j$ such that $0<-q(B_j)$ is minimal.
We can assume up to renumbering that $j=1$. 

{\em Step 1. We show that $b_1=1$ and $q(B_1)<-1$. Moreover $q(A, B_1)=0$ and $q(M, B_j)=0$ for all $j \geq 2$.}

If $M+B_1 \in \BKX$ we have $q(M+B_1) \geq q(M, B_1)>0$ and therefore $A=M+B_1$ by Lemma \ref{lemma1}. Thus $M+B_1 \not\in \BKX$, so  Lemma \ref{lemma:key} yields
$q(A)<-q(B_1)$  and $q(M,B_1)=\frac{-q(B_1)}{2}$. 
Since $q(A) \in \N^*$ this shows $q(B_1)<-1$. 

Arguing by contradiction we assume that (up to renumbering $B_2, \ldots, B_{k+1}$) we have
$q(M, B_2)>0$. By our minimality assumption and Lemma \ref{lemma:divisibility} (applied with $E=B_2$) this implies
\begin{multline*}
q(A) = q(A, M) + q(A, B) \geq q(A, M) = q(B, M) \\
= \sum b_j q(B_j, M) \geq \frac{-q(B_1)}{2} + 
\frac{-q(B_2)}{2} \geq -q(B_1),
\end{multline*}
a contradiction. Thus we have $q(M, B_j)=0$ for all $j \geq 2$. 
In the same spirit
\begin{multline*}
q(A) = q(A, M) + q(A, B-B_1) + q(A, B_1) \geq q(A, M) + q(A,B_1) = \\
= q(B, M) + q(A,B_1) = b_1 q(B_1, M) + q(A,B_1) 
\end{multline*}
is greater or equal than $-q(B_1)$ unless $b_1=1$ and $q(A,B_1)=0$
(here we use again Lemma \ref{lemma:divisibility} with $E=B_1$).

In order to simplify the notation we set $d:=q(B_1)$.

{\em Step 2. We show that (up to renumbering) $B_2, \ldots, B_{k+1}$ we have $b_2=1$,
$q(B_1, B_2) = \frac{-d}{2}$ and $q(B_1, B_j)=0$ for $j > 2$.}
Set $B':=B-B_1$, then $B_1$ is not contained in the support of $B'$ by Step 1. Moreover
$$
0 = q(A, B_1) = q(M,B_1) + q(B_1) + \sum_{j=2}^{k+1} b_j q(B_j, B_1) 
= \frac{d}{2} + \sum_{j=2}^{k+1} b_j q(B_j, B_1) 
$$
By Lemma \ref{lemma:divisibility} (applied with $E=B_1$) all the numbers $q(B_j, B_1)$ are non-negative multiples of
$\frac{-d}{2}$ so only one of them is positive and the corresponding multiplicity is one.
This finishes the proof of Step 2 and establishes all the intersection numbers from
the theorem involving $B_1$.

{\em Step 3. We show by induction: for all $l=1, \ldots, k$ we have $q(B_l)=d$ and $q(A, B_l)=0$. Moreover (up to renumbering $B_{l+1}, \ldots, B_{k+1}$), we have 
$$
b_{l+1}=1, \qquad
q(B_l, B_{l+1}) = \frac{-d}{2},  \qquad \mbox{and} \qquad q(B_l, B_j)=0 \qquad \forall j > l+1.
$$
}
The case $l=1$ has been settled by the Steps 1 and 2, let us do the induction step $l-1 \Rightarrow l$. If $q(B_l) \geq \frac{d}{2}$ we have
$$
q(M+ \sum_{j=1}^l B_j, B_l) \geq \frac{-d}{2} + \frac{d}{2} \geq 0.
$$
By the induction hypothesis $q(M+ \sum_{i=1}^l B_i, B_j)=0$ for $j=1, \ldots,l-1$, so  $M+ \sum_{j=1}^l B_j \in \BKX$.
Moreover
$$
q(M+ \sum_{j=1}^l B_j) \geq q(M+ \sum_{j=1}^l B_j, M) = \frac{-d}{2}>0, 
$$
so $B = \sum_{j=1}^l B_j$ by Lemma \ref{lemma1}. Yet this is a contradiction since $l < k+1$. Thus we have $q(B_l) < \frac{d}{2}$
and therefore $q(B_l)=q(B_{l-1})=d$ by Lemma \ref{lemma:technical} and the induction hypothesis.
Recall that $-d > q(A)$ by Lemma \ref{lemma:key} so
$$
-d > q(A) = q(A, M) + q(A,B) \geq \frac{-d}{2} + q(A, B_l).
$$
Since $q(A, B_l)$ is a non-negative multiple of $\frac{-q(B_l)}{2}=\frac{-d}{2}$ by Lemma \ref{lemma:divisibility} we obtain $q(A, B_l)=0$.

Set $B^{(l)}=B-\sum_{j=1}^l B_j$ then the support of $B^{(l)}$ is the union of
$B_{l+1}, \ldots, B_{k+1}$. We have
\begin{multline*}
0 = q(A, B_l) = q(M+\sum_{j=1}^{l-1} B_j, B_l) + q(B_l) + \sum_{j=l+1}^{k+1} b_j q(B_j, B_l) =
\\
\frac{-d}{2} + d + \sum_{j=l+1}^{k+1} b_j q(B_j, B_l).
\end{multline*}
By Lemma \ref{lemma:divisibility} applied with $E=B_l$ the numbers $q(B_j, B_l)$ are non-negative multiples
of $\frac{-q(B_l)}{2}=\frac{-d}{2}$ so only one of them is positive and the corresponding multiplicity is one.

{\em Step 4. Conclusion.} By the Steps 1, 2, 3 we have
$b_j=1$ for all $j=1, \ldots, k+1$, moreover we have determined all the intersection
numbers $q(B_i,B_j)$ except $q(B_{k+1})$. The divisor $A$ is in $\BKX$, so 
$$
0 \leq q(A, B_{k+1}) = q(M+\sum_{j=1}^{k} B_j, B_{k+1}) + q(B_{k+1})
= \frac{-d}{2} + q(B_{k+1}).
$$
Thus we have $-q(B_{k+1}) \leq \frac{-d}{2}$, in particular $q(B_{k+1})$ is distinct
from $q(B_k)$. 

If $B$ is not a prime divisor the computation above implies $q(B)=q(B_{k+1})<0$.
If $B=B_1$ is a prime divisor we still have $q(B) \leq 0$ since otherwise
$B \in \BKX$ and $h^0(X, B) \geq n+1$ by Corollary \ref{cor:Jiang}.
Thus $q(B) \leq 0$ holds in both cases, the property $q(M,B)>0$ is now a consequence of $q(A)>0, q(M)=0$.
\end{proof}

\begin{proof}[Proof of Theorem \ref{theorem:main}]
The nef and big divisor $A$ is in $\BKX$ and $q(A)>0$, so $h^0(X, A) \geq 2$ by Corollary 
\ref{cor:Jiang}.
By Theorem \ref{theorem-technical} the fixed part is reduced, so the first statement is clear.

For the second statement we follow the argument in 
\cite[Cor.4.9]{Rie23}:
let $A \simeq M+B$ be the decomposition into mobile and fixed part, by assumption we have $B \neq 0$.
Consider now the linear system $|2A|$ and let 
$2A \simeq M' + B'$
be the decomposition into mobile and fixed part. Since 
$$
2M + 2B \simeq 2 A \simeq M' + B'
$$
and $2M$ is mobile we have $B' \subset 2B$. By Theorem \ref{theorem-technical}
the fixed part $B'$ is reduced, so we even have $B' \subset B$. In conclusion $M' \simeq 2M+B+(B-B')$ and $B-B'$ is an effective divisor.
Since $M'$ and $M$ are in $\BKX$ we have
$$
q(M') \geq q(M', 2M+B) \geq 2 q(M',M) \geq 2 q(2M+B, M) = 2 q(B,M)>0
$$
where the strict inequality is again due to Theorem \ref{theorem-technical}.
By Lemma \ref{lemma1} this implies $B'=0$, i.e. $|2A|$ has no fixed divisors.

If $A$ is ample we have $q(A, B_1)>0$ by Lemma \ref{lemma:HI2}, this excludes the second
case in Theorem \ref{theorem-technical}.  This shows the last statement.
\end{proof}

\begin{remark*}
The proof of the first two statements only uses that $A \in \BKX$ and $q(A)>0$. For the third statement we need the additional assumption that $q(A,E)>0$ for every prime
divisor $E$ with $q(E)<0$.
\end{remark*}

\section{Proof of Theorem \ref{thm:HK4}}

\begin{proof}
Let $A \simeq M + B$ be the decomposition into mobile and fixed part. Since $M \in \BKX$ there
exists an MMP  
$$
\holom{\tau}{X}{X'}
$$
to a hyperk\"ahler manifold $X'$ such that $M':=\tau_* M$ is nef \cite[Thm.1.2]{MZ13}. Any birational map between hyperk\"ahler manifolds is an isomorphism in codimension one, so setting
$A':=\tau_* A$ and $B' := \tau_* B$ 
we have
$H^0(X, \sO_X(A)) \simeq H^0(X', \sO_{X'}(A'))$ and
$$
A' \simeq M' + B'
$$
is the decomposition into mobile and fixed part of $A'$. In particular
$h^0(X', \sO_{X'}(M'))=h^0(X', \sO_{X'}(A')) \geq 4$, so $\kappa(X', M') \geq 1$. Since $M'$ is nef we can apply
\cite[Thm.1.5]{Fuk02} to the pair $(X', \epsilon M')$ for some $0<\epsilon \ll 1$ to obtain that $M'$ is semiample. Since $q(M')=q(M)=0$ by Lemma \ref{lemma1} we obtain that some positive multiple of $M'$ defines a Lagrangian fibration
$$
\holom{f}{X'}{S'}
$$
onto a surface $S'$. By \cite{Ou19,HX22} (cf. \cite{MX25} for a short proof) we have $S' \simeq \PP^2$. We claim that $M' \simeq f^* d H$ with $d \geq 2$ and $H$ the hyperplane class on $\PP^2$:
let $D \in |M'|$ be a general element. Since $M' \sim_\Q \lambda f^* H$ for some $\lambda \in \Q$
the divisor $D$ has no $f$-horizontal component, 
so $f(D) \subset \PP^2$ is an effective divisor and $D \subset f^* f(D)$.
The linear system $|M'|$ has no fixed components, so none of the irreducible components
of $D$ is contained in $\fibre{f}{\Delta}$ where $\Delta \subset \PP^2$ is the discriminant locus. Therefore
we have $D = f^* f(D)$, in particular $M' \simeq f^* d H$ for some $d \in \N$.
Since $h^0(X', \sO_{X'}(M'))>3=h^0(\PP^2, \sO_{\PP^2}(H))$ it is clear that $d \geq 2$.

Set $L:=\tau^{-1}_* f^* H$, then $M \simeq dL$, so $M$ is divisible in the Picard lattice and therefore
the fixed part $B$ is a prime divisor $B_1$ by Corollary \ref{cor:notprimitive}. From the construction it is clear that $L$ has the properties claimed in Theorem \ref{thm:HK4}.   

Finally let us show that $q(B_1)<0$: if $q(B_1)=0$ the divisor $L+B_1$ is in $\BKX$ and
$q(L+B_1)=q(L,B_1)=\frac{1}{d}q(M,B_1)>0$ by Theorem \ref{theorem-technical}. Thus $h^0(X, \sO_X(L+B_1))>3$ by Corollary \ref{cor:Jiang}. Yet $B_1$ is a fixed component of $dL+B_1$, so
it is a fixed component of $L+B_1$. Thus we have
$$
h^0(X, \sO_X(L+B_1)) = h^0(X, \sO_X(L)) = 3,
$$
the final contradiction.
\end{proof}


\providecommand{\bysame}{\leavevmode\hbox to3em{\hrulefill}\thinspace}
\providecommand{\MR}{\relax\ifhmode\unskip\space\fi MR }
\providecommand{\MRhref}[2]{%
  \href{http://www.ams.org/mathscinet-getitem?mr=#1}{#2}
}
\providecommand{\href}[2]{#2}

\end{document}